\def\proof{\@ifnextchar[{\@oproof}{\@nproof}}
\def\@oproof[#1][#2]{\trivlist\item[\hskip\labelsep
\textit{#2 Proof of\ #1.}~]\ignorespaces}
\def\@nproof{\trivlist\item[\hskip\labelsep\textit{Proof.}~]\ignorespaces}
\newdimen\plusheight
\def\+{\;\lower\plusheight\hbox{$+$}\;}
\newdimen\minusheight
\def\-{\;\lower\minusheight\hbox{$-$}\;}
\newdimen\cdotsheight
\def\cds{\lower\cdotsheight\hbox{$\cdots$}}
\DeclareMathOperator{\Hom}{Hom}
\DeclareMathOperator{\Proj}{Proj}
\DeclareMathOperator{\Spec}{Spec}
\DeclareMathOperator{\Ext}{Ext}
\DeclareMathOperator{\In}{in}
\DeclareMathOperator{\Mon}{Mon}
\DeclareMathOperator{\Mini}{Min}
\DeclareMathOperator{\Tr}{Tr}
\DeclareMathOperator{\GL}{GL}
\newcommand{\ZZ}{\mathbb{Z}}
\newcommand{\RR}{\mathbb{R}}
\newcommand{\AAA}{\mathbb{A}}
\newcommand{\NN}{\mathbb{N}}
\newcommand{\Pp}{\mathcal{P}}
\newcommand{\mm}{\mathfrak{m}}
\newcommand{\pp}{\mathfrak{p}}
\newcommand{\supp}{\mathrm{supp}}
\newcommand{\height}{\mathrm{ht}}
\newcommand{\init}{\mathrm{in}}
\numberwithin{equation}{section}
\newtheorem{theorem}{Theorem}[section]
\newtheorem{lemma}[theorem]{Lemma}
\newtheorem{question}[theorem]{Question}
\newtheorem{cor}[theorem]{Corollary}
\newtheorem{proposition}[theorem]{Proposition}
\newtheorem{remark}[theorem]{Remark}
\newtheorem{definition}[theorem]{Definition}
 \newtheorem{example}[theorem]{Example}
\title{Gr\"obner deformation and $F$-singularities}
\author[]{Mitra Koley}
\address{Stat-Math Unit, Indian Statistical Institute, 203 B.T. Road, Kolkata, India 700035}
\email{mitra.koley@gmail.com}
\author[]{Matteo Varbaro}
\address{Dipartimento di Matematica, Universit\'a di Genova, Italy}
\email{varbaro@dima.unige.it}
\keywords{Gr\"obner deformation; F-rationality; strongly F-injective.}
\begin{document}
\maketitle
\begin{abstract}
For polynomial ideals in positive charachteristic, defining $F$-split rings and admitting a squarefree monomial initial ideal are different notions. In this note we show that, however, there are strong interactions in both directions. Moreover we provide an overview on which $F$-singularities are Gr\"obner deforming. Also, we prove the following characteristic-free statement: if $\pp$ is a height $h$ prime ideal such that $\init(\pp^{(h)})$ contains at least one squarefree monomial, then $\init(\pp)$ is a squarefree monomial ideal.
\end{abstract}
\bigskip
\section{Introduction}

The motivation for this note has been, essentially, trying to achieve a better understanding of the following question concerning polynomial ideals $I$ of a polynomial ring $S$ over a field $K$:

\begin{question}\label{q1}
When is there a monomial order $<$ on $S$ such that $\init_<(I)$ is a squarefree?
\end{question}

One of the reasons why this is an interesting problem arises from the recent work \cite{CV} by Conca and the second author of this paper, roughly stating that $I$ and $\init_<(I)$ are much more related than usual provided the latter is a squarefree monomial ideal. There are already many known classes of ideals $I$ (and suitable monomial orders) such that $\init_<(I)$ is squarefree, such as ideals defining Algebras with Straightening Law, Cartwright-Sturmfels ideals and Knutson ideals. In Theorem \ref{t:symb} we identify a new class: If $I$ is a radical ideal, as soon as $\init_<(I^{(h)})$ contains a squarefree monomial, where $h$ is the maximum height of a minimal prime ideal of $I$, then $\init_<(I)$ is a squarefree monomial ideal. The result is proved first in positive characteristic, and then derived over fields of characteristic 0. The proof in positive characteristic relies on the ``$F$-split'' notion and a suitable version of Fedder's criterion, see Theorem \ref{t:charp}.

\bigskip

If $K$ has positive characteristic, in which case we can speak of $F$-singularities (where $F$ stands for the Frobenius endomorphism), we investigate on the following:
\begin{question}\label{q2}
For which kind of $F$-singularities do we have that $S/I$ has those $F$-singularities provided that, for some weight vector $w\in\NN^n$, $S/\init_w(I)$ has those $F$-singularities? 
\end{question}
The two questions above are actually related: if $\init_<(I)$ is squarefree, then $S/\init_<(I)$ is $F$-split. Although there are examples of ideals $I$ such that $\init_<(I)$ is squarefree but $S/I$ is not $F$-split (e.g. see Example \ref{e2:anurag}), it turns out that $S/I$ is always $F$-injective, and even strongly $F$-injective, provided $\init_<(I)$ is squarefree (see Corollary \ref{imp}).

On the other hand, it is very easy to find examples of ideals $I$ such that $S/I$ is $F$-split but $\init_<(I)$ is not squarefree for any monomial order. However, Theorem \ref{t:charp} states that many ideals defining $F$-split rings admit a squarefree initial ideal; hence, at some extent, ``being $F$-split'' and ``admitting a squarefree initial ideal'' are connected properties.

\bigskip

Question \ref{q2} is also related to the so-called deformation problem: if $(R,\mm)$ is a Noetherian local ring and $x\in \mm$ is a nonzero divisor on $R$ such that $R/xR$ has some property $\Pp$, is it true that $R$ has property $\Pp$ as well? Of course the answer depends on the property $\Pp$, for example it is positive if $\Pp$ is ``being a domain'' and negative if $\Pp$ is ``being irreducible''. There is a fervent research on the deformation problem when $\Pp$ is an $F$-singularity, especially if ``$\Pp =$ $F$-injective'', in which case the problem is still open. Regarding Question \ref{q2}, the answers we get agree with the answers to the deformation problem; this is expected, though it needs some explanations.

\section{Gr\"obner deformations}
Throughout this note, by a ring we mean a Noetherian commutative ring with unity. A $\NN$-graded ring is a ring $R=\bigoplus_{i\in\NN}R_i$ (usually $R_0$ will be a field). A $\NN$-graded ring $R=\bigoplus_{i\in\NN}R_i$ is standard graded if $R=R_0[R_1]$.

Let $S=K[X_1,\ldots ,X_n]$ be a polynomial ring over a field $K$ and $I\subset S$ be an ideal. If $<$ is a monomial order on $S$ we can consider the initial ideal $\In_<(I)\subset S$ generated by all the monomials of the form $\In_<(f)$ with $f\in I$. It turns out that it is possible to choose a suitable weight vector $w\in(\NN_{>0})^n$ (depending on $<$ and $I$) such that $\In_<(I)=\In_w(I)$. Here $\In_w(I)$ is the ideal of $S$ generated by $\In_w(f)$ with $f\in I$, where $\In_w(f)$ stands for the sum of the terms of $f$ with maximal $w$-degree. The latter point of view is more convenient concerning some aspects. For example, besides Gr\"obner bases it also includes Sagbi bases. In fact, if $A\subset S$ is a $K$-subalgebra of $S$, consider the $K$-subalgebra $\In_<(A)\subset S$ generated by all the monomials of the form $\In_<(f)$ with $f\in A$. If $f_1,\ldots ,f_m\in A$ are a Sagbi basis of $A$, that is $\In_<(A)=K[\In_<(f_1),\ldots ,\In_<(f_m)]$, it is easy to see that $A=K[f_1,\ldots ,f_m]$. It turns out that, if $J\subset P=K[Y_1,\ldots ,Y_m]$ is the kernel of the $K$-algebra homomorphism sending $Y_i$ to $f_i$ (so that $P/J\cong A$), there exists $u\in(\NN_{>0})^m$ such that $\In_u(J)$ is the kernel of the $K$-algebra homomorphism sending $Y_i$ to $\In_<(f_i)$, hence $\In_u(J)$ is a binomial ideal and $P/\In_u(J)\cong \In_<(A)$, (cf. \cite[Corollary 2.1]{CHV}).

The formation of $\In_w(I)$ can also be seen as a deformation: Let $t$ be an extra homogenizing variable, and $\hom_w(I)\subset S[t]$ the $w$-homogenization of $I$. Then we say that $R=S[t]/\hom_w(I)$ is a {\it Gr\"obner deformation}, and we have that:
\begin{itemize}
\item $R$ is a $\NN$-graded ring such that $R_0=K$ and $t\cdot 1\in R$ has degree 1 (the grading is given by $\deg(X_i\cdot 1)=w_i$ and $\deg(t\cdot 1)=1$).
\item $t$ is a nonzero-divisor on $R$.
\item $R/tR\cong S/\In_w(I)$.
\item $R/(t-1)R\cong S/I$.
\end{itemize}
A {\it 1-parameter affine deformation over $K$} is a flat morphism $X\rightarrow \AAA^1$ where $\AAA^1$ is the affine line over $K$ and $X$ is an affine variety over $K$. In other words, a 1-parameter affine deformation over $K$ is a $K$-algebra $R$ which is a flat $K[t]$-module (equivalently a $K[t]$-module without nontrivial torsion). In the following we will write $t$ for $t\cdot 1\in R$.

\begin{lemma}
Let $R$ be a 1-parameter affine deformation over $K$. Then the following are equivalent:
\begin{enumerate}
\item $R$ is $\NN$-graded, $R_0=K$ and $t\in R$ is homogeneous of degree 1.
\item $R$ is a Gr\"obner deformation.
\end{enumerate}
\end{lemma}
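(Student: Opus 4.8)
The implication $(2)\Rightarrow(1)$ is immediate from the displayed properties of a Gr\"obner deformation: $R=S[t]/\hom_w(I)$ is $\NN$-graded with $R_0=K$ and with $t$ homogeneous of degree $1$. So the content is $(1)\Rightarrow(2)$.

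Assume $(1)$. Since $R$ is the coordinate ring of the affine variety $X$ it is a finitely generated $K$-algebra, and since it is $\NN$-graded with $R_0=K$ we may choose homogeneous elements $g_1,\dots,g_n\in R$ of positive degrees $w_1,\dots,w_n$ generating $R$ as a $K$-algebra. Put $S=K[X_1,\dots,X_n]$, grade $S[t]$ by $\deg X_i=w_i$ and $\deg t=1$, and let $\phi\colon S[t]\to R$ be the $K$-algebra map with $\phi(X_i)=g_i$ and $\phi(t)=t$; this is a graded surjection, so $J:=\ker\phi$ is a homogeneous ideal with $R\cong S[t]/J$. Let $\psi\colon S[t]\to S$ be dehomogenization ($t\mapsto1$) and set $I:=\psi(J)\subseteq S$ (note $w\in(\NN_{>0})^n$). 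The plan is to prove $J=\hom_w(I)$, which is exactly the assertion that $R$ is a Gr\"obner deformation.

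For $J=\hom_w(I)$ the starting point is the elementary identity: if $G\in S[t]$ is homogeneous of degree $E$ and $f:=\psi(G)$ has maximal $w$-degree $D$, then, writing $G=\sum_\alpha c_\alpha X^\alpha t^{E-w\cdot\alpha}$ (forced by homogeneity), one reads off $G=t^{E-D}\hom_w(f)$. From this, $J\subseteq\hom_w(I)$ is immediate, since $J$ is generated by its homogeneous elements and each homogeneous $G\in J$ equals $t^{E-D}\hom_w(\psi(G))$ with $\psi(G)\in I$. For the reverse inclusion $\hom_w(I)\subseteq J$ I would use that $t$ is a nonzerodivisor on $R=S[t]/J$, which is where the flatness of $R$ over $K[t]$ in hypothesis $(1)$ enters (flat over a PID $\Leftrightarrow$ torsion-free). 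Applying the identity to a homogeneous $G\in J$ and cancelling the power of $t$ gives $\hom_w(\psi(G))\in J$; but a general $f\in I$ need not be $\psi$ of a homogeneous element of $J$, only a finite sum $f=\sum_d\psi(G_d)$ of dehomogenizations of homogeneous $G_d\in J$. To push through to $\hom_w(f)\in J$ one uses that $\hom_w$ is additive up to powers of $t$: writing $\deg_w$ for the maximal $w$-degree, for $f,g\in S$ with $\deg_w f\ge\deg_w g$ one has $t^{\deg_w f-\deg_w(f+g)}\hom_w(f+g)=\hom_w(f)+t^{\deg_w f-\deg_w g}\hom_w(g)$, which holds because a homogeneous element of $S[t]$ is determined by its degree and dehomogenization (the ideal $(t-1)S[t]$ contains no nonzero homogeneous element, as one checks by inspecting lowest and highest degree components). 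Iterating this identity yields $t^M\hom_w(f)\in J$ for some $M\ge0$, hence $\hom_w(f)\in J$ since $t$ is a nonzerodivisor, so $\hom_w(I)\subseteq J$ and $J=\hom_w(I)$.

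The single non-formal point, and the main obstacle, is that $\hom_w$ is not additive: this blocks a one-line reduction of ``$\hom_w(I)\subseteq J$'' to the homogeneous generators of $J$, and makes necessary both the additivity-up-to-$t$-powers identity and a second appeal to the nonzerodivisor property. Everything else — the graded presentation of $R$, homogeneity of $J$, the identity $G=t^{E-D}\hom_w(\psi(G))$, and the passage from flatness over $K[t]$ to ``$t$ a nonzerodivisor'' — is routine.
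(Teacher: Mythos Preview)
Your proof is correct and follows essentially the same route as the paper: set up a graded presentation $R\cong S[t]/J$, define $I=\psi(J)$ via dehomogenization, and prove $J=\hom_w(I)$ using that homogeneous elements of $S[t]$ factor as $t^{E-D}\hom_w(\psi(G))$ together with the fact that $t$ is a nonzerodivisor on $R$. The only difference is in the inclusion $\hom_w(I)\subseteq J$: where you iterate your additivity-up-to-$t$-powers identity, the paper observes directly that if $f=\pi(F)$ with $F=\sum_i F_i\in J$ (homogeneous decomposition), then $F'=\sum_i t^{d-i}F_i$ is a single homogeneous element of $J$ with $\pi(F')=f$, after which one factors $F'=t^rG$ and cancels $t^r$. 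This one-step homogenization is a bit cleaner than your inductive argument, but the underlying idea is the same.
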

\begin{proof}
We already noticed (2) $\implies$ (1). For the converse, let $V\subset R\setminus K$ be a finite dimensional graded vector space containing $t$ such that $R=K[V]$. Set $n+1=\dim_KV$. Let $v_1,\ldots ,v_n\in V$ be homogeneous elements such that $t,v_1,\ldots ,v_n$ is a $K$-basis of $V$, $S=K[X_1,\ldots ,X_n]$ and $S[z]\rightarrow R$ the $K$-algebra homomorphism sending $z$ to $t$ and $X_i$ to $v_i$. Call $J\subset S[z]$ the kernel, $I=(J+(z-1))/(z-1)\subseteq S$, $w_i=\deg(v_i)=\deg(X_i)$ and put $\deg(z)=\deg(t)=1$, so that the above map is graded. We claim that $J=\hom_w(I)$ (so that $R/tR\cong S/\In_w(I)$). So we would conclude because $R\cong S[z]/\hom_w(I)$.

\vskip 1mm

To prove the claim, it is useful to introduce the dehomogeneization homomorphism $\pi:S[z]\rightarrow S$ sending $X_i$ to itself and $z$ to 1. With this notation $I=\pi(J)$.

\vskip 1mm

Let us first see that $J\subset \hom_w(I)$. Let $F$ be a homogeneous element of $J$. We can write $F=z^rG$ where $G$ is a homogeneous polynomial of $S[z]$ not divided by $z$. Of course $\pi(F)=\pi(G)$ belongs to $I$, so $\hom_w(\pi(G))=G\in \hom_w(I)$. Since $F$ is a multiple of $G$, it belongs to $\hom_w(I)$ as well. Since $J$ is a homogeneous ideal we conclude that $J\subset \hom_w(I)$.

For the inclusion $\hom_w(I)\subset J$, take $f\in I$ and consider $\hom_w(f)\in \hom_w(I)$. By definition $f=\pi(F)$ for some $F\in J$. Since $J$ is homogeneous, $F=\sum_iF_i$ where $F_i\in J$ is homogeneous of degree $i$. If $d=\max\{i:F_i\neq 0\}$, we can replace $F$ with $F'=\sum_iz^{d-i}F_i$, which is a homogeneous element of $J$ such that $\pi(F')=f$. So we can assume at once that $F$ is homogeneous.
As before, we can write $F=z^{r}G$ where $G$ is a homogeneous polynomial of $S[z]$ not divided by $z$. Since $R$ is flat over $K[t]$, $t$ is a nonzero-divisor on $R$, so that $G$ belongs to $J$. So $\hom_w(f)=G$ belongs to $J$. Since $\hom_w(I)$ is generated by elements of the form $\hom_w(f)$ with $f\in I$, we conclude that $\hom_w(I)\subset J$. 
\end{proof}

In view of the previous lemma, we will refer to a $\NN$-graded ring $R$ which is a $K[t]$-module without nontrivial torsion, such that $t\in R$ is homogeneous of degree 1 and such that $R_0=K$, as a Gr\"obner deformation. We introduce the following concept:

\begin{definition}
Let $\Pp$ be some property that can have a ring. We say that $\Pp$ is $G$-deforming if the following two conditions hold: 
\begin{enumerate}
\item If $R$ is a $\NN$-graded ring with $R_0=K$ and $x\in R$ is a nonzero-divisor on $R$ of degree 1 such that $R/xR$ has property $\Pp$, then $R_x$ has property $\Pp$ as well.
%\item If $R$ has property $\Pp$, then the localizations $R_x$ where $x$ is a nonzero-divisor on $R$ have property $\Pp$ as well. 
\item If $R$ is a (not necessarily graded) ring such that $R[X,X^{-1}]$ has property $\Pp$, then $R$ has property $\Pp$ as well.
\end{enumerate}
\end{definition} 

\begin{proposition}\label{p:G-def}
Let $R$ be a Gr\"obner deformation. If $R/tR$, has a G-deforming property $\Pp$, then $R/(t-\lambda)R$ has property $\Pp$ as well for each $\lambda\in K$.

In other words, if $I\subset S=K[X_1,\ldots ,X_n]$ is an ideal and $w\in\NN^n$ a weight vector such that $S/\In_w(I)$ has a $G$-deforming property $\Pp$, then $S/I$ has property $\Pp$ as well.
\end{proposition}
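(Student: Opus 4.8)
The plan is to exploit the fact that, for a Gr\"obner deformation $R$, the localization $R_t$ is isomorphic to a Laurent polynomial ring over $R/(t-1)R$; the definition of a $G$-deforming property is tailored precisely so that this single observation forces the conclusion. So the first step is to record this structural fact. Since $R$ is $\NN$-graded with $R_0=K$ and $t\in R_1$ a nonzero-divisor, the localization $R_t$ is $\ZZ$-graded and $t$ becomes a homogeneous unit of degree $1$; hence multiplication by $t$ identifies all graded components of $R_t$ with $(R_t)_0$, the powers $t^i$ being linearly independent over $(R_t)_0$ because they lie in distinct degrees, so $R_t\cong (R_t)_0[t,t^{-1}]$ as rings. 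I would then identify $(R_t)_0$ with $R/(t-1)R$: the additive map $R\to (R_t)_0$ sending a homogeneous element $r$ of degree $d$ to $r/t^d$ is multiplicative, clearly surjective, and kills $t-1$; conversely, if $\sum_i r_i/t^i=0$ in $R_t$ then, $t$ being a nonzero-divisor, $\sum_i r_i t^{D-i}=0$ in $R$ for $D=\max\{i:r_i\neq 0\}$, and reducing modulo $t-1$ gives $\sum_i r_i \equiv \sum_i r_i t^{D-i}=0$, so the kernel is exactly $(t-1)R$. Thus $R_t\cong (R/(t-1)R)[t,t^{-1}]$.

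With this in hand the argument is short. By hypothesis $R/tR$ has the $G$-deforming property $\Pp$, and $t$ is a nonzero-divisor on $R$ of degree $1$, so the first condition in the definition of a $G$-deforming property yields that $R_t$ has $\Pp$. But $R_t\cong (R/(t-1)R)[X,X^{-1}]$ (with $X=t$), so the second condition yields that $R/(t-1)R$ has $\Pp$.

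For an arbitrary $\lambda\in K$: if $\lambda=0$ this is the hypothesis, while if $\lambda\neq 0$ the graded $K$-algebra automorphism of $R$ that multiplies the degree-$i$ component by $\lambda^{i}$ carries $t-\lambda$ to $\lambda(t-1)$, hence induces an isomorphism $R/(t-\lambda)R\cong R/(t-1)R$, which therefore also has $\Pp$. Finally, the ``in other words'' reformulation follows at once by taking $R=S[t]/\hom_w(I)$, which is a Gr\"obner deformation with $R/tR\cong S/\In_w(I)$ and $R/(t-1)R\cong S/I$.

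I expect the only genuine obstacle to be the first step, the identification $R_t\cong (R/(t-1)R)[t,t^{-1}]$: this is classical (it is the affine-cone-versus-distinguished-open-set picture) but it does use the hypothesis that $t$ be a nonzero-divisor and a small computation with the dehomogenization map. Everything after that is a direct unwinding of the two clauses in the definition of a $G$-deforming property.
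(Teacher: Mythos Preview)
Your proof is correct and follows exactly the same approach as the paper's: apply clause (1) of the $G$-deforming definition to pass from $R/tR$ to $R_t$, use the identification $R_t\cong (R/(t-1)R)[X,X^{-1}]$ and clause (2) to descend to $R/(t-1)R$, and then invoke the graded scaling automorphism to handle general $\lambda\neq 0$. The only difference is that you supply details for the isomorphism $R_t\cong (R/(t-1)R)[t,t^{-1}]$ and for the isomorphism $R/(t-\lambda)R\cong R/(t-1)R$, both of which the paper simply asserts.
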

\begin{proof}
Since $\Pp$ is $G$-deforming, then:
\begin{itemize}
\item Because $R/tR$ has property $\Pp$, then $R_t$ has property $\Pp$ as well.
%\item Since $R$ has property $\Pp$, then the localization $R_t$ has property $\Pp$ as well. 
\item Notice that, if $A=R/(t-1)R$, $R_t\cong A[X,X^{-1}]$. Since $A[X,X^{-1}]$ has property $\Pp$, $A=R/(t-1)R$ has property $\Pp$ as well.
\end{itemize}
So $R/(t-1)R$ has property $\Pp$. Now simply notice that, since $R$ is a Gr\"obner deformation, $R/(t-\lambda)R$ is isomorphic to $R/(t-1)R$ for $\lambda\in K\setminus\{0\}$.
\end{proof}

\begin{example}
The conclusion of Proposition \ref{p:G-def} may fail for 1-parameter affine deformations over $K$ which are not Gr\"obner. For example, if $K$ is a field of characteristic 5, consider 
\[R=K[X,Y,Z,t]/(tX^3+tY^3+tZ^3+XYZ).\]
such an $R$ is a 1-parameter affine deformation over $K$ and $R/tR\cong K[X,Y,Z]/(XYZ)$ is strongly $F$-injective. As we will see, being strongly $F$-injective is a $G$-deforming property, however $R/(t-1)R\cong K[X,Y,Z]/(X^3+Y^3+Z^3+XYZ)$ is not even $F$-injective.
\end{example}

\section{$F$-singularities and Gr\"obner deformations}

Let $p$ be a prime number. Let $R$ be a ring of characteristic $p$, and consider the \emph{Frobenius map}:

\begin{eqnarray*}
F: & R\longrightarrow R \\
& r\mapsto r^p
\end{eqnarray*}

Note that $F$ is a ring homomorphism. We denote by $F_*R$ the $R$-module defined as follows:  
\begin{itemize}
\item $F_*R = R$ as additive group;
\item $r\cdot x = r^px$ for all $r\in R$ and $x\in F_*R$.
\end{itemize}
This way we can also think of $F$ as the following map of $R$-modules:
\begin{eqnarray*}
F: & R\longrightarrow F_*R \\
& r\mapsto r^p
\end{eqnarray*}

The ring $R$ is reduced if and only if $F$ is injective, so it is natural to introduce the following concept:

\begin{definition}
$R$ is \emph{$F$-split} if there exists a homomorphism $\theta:F_*R\rightarrow R$ of $R$-modules such that $\theta\circ F=1_R$. Such a $\theta$ is called an $F$-splitting of $R$. 
\end{definition}

If $I$ is an ideal of $R$, we have an induced map of $R$-modules $F:H^i_I(R)\to H^i_I(F_*R)$ for all $i\in\NN$. As Abelian groups, it is easy to check that $H^i_{F(I)R}(R)=H^i_I(F_*R)$, hence, since $F(I)R=(x^p:x\in I)$ and $I$ have the same radical, we have a map of Abelian groups:
\[F:H_I^i(R)\to H_I^i(R).\]
If $R$ is $F$-split, of course $F:H^i_I(R)\to H^i_I(F_*R)$ splits as a map of $R$-modules. In particular, $F:H^i_I(R)\to H^i_I(R)$ is injective for any ideal $I\subset R$ and $i\in\NN$. The latter fact turned out to be very powerful since the work of Hochster and Roberts \cite{Hochster-Roberts}, so it has been natural to introduce the following definition:

\begin{definition}
$R$ is \emph{$F$-injective} if the map $F:H^i_{\mm}(R)\to H^i_{\mm}(R)$ is injective for any maximal ideal $\mm\subset R$ and $i\in\NN$.  
\end{definition}

The ``$F$-split'' property does not deform, i.e. there are examples of local rings $R$ which are not $F$-split but such that $R/xR$ is $F$-split for some regular element $x\in R$ (see Example \ref{e2:anurag}). It is still an open problem whether the ``$F$-injective'' property deforms. For this reason we further need to introduce the following property:

\begin{definition}
$R$ is \emph{$F$-full} if the image of the map $F:H^i_{\mm}(R)\to H^i_{\mm}(R)$ generates $H^i_{\mm}(R)$ as $R$-module for any maximal ideal $\mm\subset R$ and $i\in\NN$.  
\end{definition}

It turns out that, if $R$ is $F$-split, then it is $F$-full (\cite[Theorem 3.7]{Ma} and \cite[Remark 2.4]{Ma-Qui}). Moreover the ``$F$-full property'' is known to deform (\cite[Theorem 4.2]{Ma-Qui}). Since there is no relationship between being $F$-full and being $F$-injective (any Cohen-Macaulay ring is $F$-full; on the other hand there exist $F$-injective rings that are not $F$-full, see \cite[Example 3.5]{MSS}), we introduce the last $F$-singularity of this paper:

\begin{definition}
$R$ is \emph{strongly $F$-injective} if it is $F$-injective and $F$-full.  
\end{definition}

By the previous discussion it follows that being strongly $F$-injective is a property in between the ```$F$-split'' and the ``$F$-injective'' properties. The important point for us is that the ``strongly $F$-injective'' property deforms by \cite[Corollary 5.16]{Ma-Qui}.

%This is one of the reasons why the following property, that is in between of ``$F$-split'' and ``$F$-injective", is becoming more and more relevant. To introduce it, we recall that an $R$-submodule $W\subset H^i_{\mm}(R)$ is called \emph{$F$-stable} if $F(W)\subseteq W$.
%
%\begin{definition}
%$R$ is \emph{$F$-antinilpotent} if, for any $i\in\NN$, the map $F:H^i_{\mm}(R)/W\to H^i_{\mm}(R)/W$ is injective for any maximal ideal $\mm\subset R$ and for any $F$-stable $R$-submodule $W\subset H^i_{\mm}(R)$.  
%\end{definition}
%
%Obviously if $R$ is $F$-antinilpotent then it is also $F$-injective, because $\{0\}\subset H^i_{\mm}(R)$ is $F$-stable. Much less obviously, if $R$ is $F$-split then it is also $F$-antinilpotent (\cite[Theorem 3.7]{Ma}). Also, the ``$F$-antinilpotent property'' is known to deform (\cite[Theorem 4.2]{Ma-Qui}).

\subsection{$F$-splittings of the polynomial ring}\label{sec:splittingpoly}

In this subsection, we essentially combine parts of classical Fedder's paper \cite{Fedder} with parts of the more recent paper of Knutson \cite{Knutson}, in order to find interesting ideals having a squarefree Gr\"obner degeneration.

For this subsection, $K$ will be a perfect field of prime characteristic $p$ and $S=K[X_1,\ldots ,X_n]$ the polynomial ring in $n$ variables over $K$. It is easy to see that $F_*S$ is the free $S$-module generated by the monomials $X_1^{i_1}\cdots X_n^{i_n}$ with $i_j<p$ for all $j$. In particular, $S$ is $F$-split. We want to describe all the $F$-splittings $\theta:F_*S\rightarrow S$, and more generally the elements of $\Hom_S(F_*S,S)$. Of course the latter is a free $S$-module generated by the dual basis of $X_1^{i_1}\cdots X_n^{i_n}$ with $i_j<p$ for all $j$, say $\phi_{i_1,\ldots ,i_n}$. But our purpose is to understand the structure of $\Hom_S(F_*S,S)$ as an $F_*S$-module.

To this goal, let us introduce the fundamental element $\Tr:=\phi_{p-1,p-1,\ldots ,p-1}\in \Hom_S(F_*S,S)$. We claim that $\Hom_S(F_*S,S)$, as an $F_*S$-module, is generated by $\Tr$. More precisely, the following is an isomorphism of $F_*S$-modules:
\begin{eqnarray*}
\Phi:& F_*S\rightarrow & \Hom_S(F_*S,S)\\
& f \mapsto & f\star \Tr:g\mapsto \Tr(fg)
\end{eqnarray*}
The fact that $\Phi$ is an injective map of $F_*S$-modules is clear. For the surjectivity, just notice that, if $i_1,\ldots ,i_n$ are natural numbers such that $i_j<p$ for all $j$, we have $\phi_{i_1,\ldots ,i_n}=X_1^{p-i_1-1}\cdots X_n^{p-i_n-1}\star \Tr$. 

\begin{remark}
Notice that, given $f\in S$, $f\star \Tr$ is an $F$-splitting of $R$ if and only if the following two conditions hold:
\begin{enumerate} 
\item $X_1^{p-1}\cdots X_n^{p-1}\in \supp(f)$ and its coefficient in $f$ is 1.
\item If $X_1^{u_1}\cdots X_n^{u_n}\in \supp(f)$ and $u_1\equiv \ldots\equiv u_n \equiv -1$ (mod $p$), then $u_i=p-1$ $\forall \ i$.
\end{enumerate}
\end{remark}

\begin{definition}
If $\theta:F_*S\rightarrow S$ is an $F$-splitting, we say that an ideal $I\subset S$ is {\it compatibly split} with respect to $\theta$ if $\theta(I)\subset I$. 
\end{definition}

\begin{remark}
Of course, if an ideal $I\subset S$ is compatibly split with respect to an $F$-splitting $\theta$, then $\overline{\theta}:(F_*S)/I=F_*(S/I)\rightarrow S/I$ defines an $F$-splitting of $S/I$; in particular $S/I$ is $F$-split.
Furthermore, in this case, $\theta(I)=I$ (indeed the inclusion $I\subset \alpha(I)$ holds true for any $F$-splitting $\alpha\in\Hom_S(F_*S,S)$).
\end{remark}

\begin{proposition}\label{standardsplitting}
The map $\theta=X_1^{p-1}\cdots X_n^{p-1}\star \Tr\in \Hom_S(F_*S,S)$ is an $F$-splitting of $S$, and the compatibly split ideals with respect to $\theta$ are exactly the squarefree monomial ideals of $S$.
\end{proposition}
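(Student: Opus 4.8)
The plan is to compute the map $\theta = X_1^{p-1}\cdots X_n^{p-1}\star\Tr$ explicitly on monomials and then characterize which ideals are stable under it. First I would observe that for a monomial $m = X_1^{a_1}\cdots X_n^{a_n}$ we have $\theta(m) = \Tr(X_1^{p-1+a_1}\cdots X_n^{p-1+a_n})$, and by the definition of $\Tr = \phi_{p-1,\ldots,p-1}$ this equals $X_1^{b_1}\cdots X_n^{b_n}$ where $b_i = (a_i - (p-1))/p$ when $a_i \equiv -1 \pmod p$ (equivalently $a_i = p-1 + p b_i$, i.e.\ $p-1+a_i = p(b_i+1)-1$... let me just say: $p-1+a_i \equiv -1 \pmod p$ iff $a_i\equiv 0\pmod p$), and $\theta(m) = 0$ otherwise. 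So, cleanly: writing each $a_i = p q_i + r_i$ with $0\le r_i<p$, we get $\theta(X_1^{a_1}\cdots X_n^{a_n}) = X_1^{q_1}\cdots X_n^{q_n}$ if all $r_i = 0$, and $0$ otherwise. In other words $\theta(m^p \cdot m') = m$ if $m' = 1$... more precisely $\theta$ kills every monomial that is not a perfect $p$-th power and sends $m^p\mapsto m$. (That $\theta$ is an $F$-splitting is then immediate — $\theta(X_i^p\cdots) $ recovers $X_i$ — and also follows from the Remark: $X_1^{p-1}\cdots X_n^{p-1}$ manifestly satisfies conditions (1) and (2) since it is the unique monomial in its support.)

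Next I would prove that every squarefree monomial ideal $I$ is compatibly split. Since $\theta$ is additive and $I$ is generated by squarefree monomials, it suffices to check $\theta(g m)\in I$ for $g\in S$ and $m$ a squarefree monomial generator; and since $\theta$ is additive in $g$ we may take $g$ a monomial, so $gm$ is a monomial. If $gm$ is not a $p$-th power then $\theta(gm)=0\in I$. If $gm = h^p$, then every variable dividing $m$ divides $h^p$, hence divides $h$ (here is where squarefreeness of $m$ is used: each such variable occurs in $m$ to exponent $1$, and in $gm$ to some exponent $\ge 1$, which must be a multiple of $p$, hence $\ge p$, so it occurs in $h$); therefore $m \mid h = \theta(gm)$, and $\theta(gm)\in (m)\subset I$.

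The main work is the converse: if $I$ is compatibly split with respect to $\theta$, then $I$ is a squarefree monomial ideal. I would first show $I$ is a monomial ideal. Given $f = \sum_m c_m m \in I$ with distinct monomials $m$ in the support, I want to extract each $c_m m$ individually using $\theta$ and multiplication by monomials. The idea: fix a target monomial $m_0 = X_1^{a_1}\cdots X_n^{a_n}$ in $\supp(f)$; multiplying $f$ by a suitable monomial $u$ I can arrange that $u m_0$ is a $p$-th power while $u m$ is \emph{not} a $p$-th power for every other $m\in\supp(f)$ — this is possible because distinct monomials differ in some exponent, and one can choose the exponents of $u$ so that $u m_0$ has all exponents $\equiv 0\pmod p$ while for each competing $m$ at least one exponent stays $\not\equiv 0\pmod p$; a clean way is to first multiply by a high power of the product of all variables to make everything "large", then adjust. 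Actually the cleanest route is the classical one: iterate $\theta$. Since $\theta(u f)\in I$ and $\theta$ annihilates non-$p$-th-powers, $\theta(uf) = c_{m_0} (um_0)^{1/p} + (\text{terms from other } m \text{ that happen to be } p\text{-th powers})$; by choosing $u$ appropriately (e.g.\ $u = \prod X_i^{p - (a_i \bmod p)}$ composed with enough extra $p$-th-power padding, repeated if necessary) one isolates a single monomial, proving $m_0\in I$ after rescaling by the unit $c_{m_0}$. Hence $I$ is monomial. Finally, to see $I$ is squarefree: suppose some minimal monomial generator $m$ of $I$ has $X_i^2 \mid m$, say $m = X_i^a m'$ with $a\ge 2$ and $X_i\nmid m'$. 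Multiply by $u = X_i^{p-a}\cdot(\text{a } p\text{-th power making } m' \text{ a } p\text{-th power, e.g. } (m')^{p-1})$ times more padding so that $um$ is a $p$-th power; then $\theta(um)\in I$ is a monomial in which the exponent of $X_i$ is $(p-a+a)/p = 1 < a$, while the other variables still support a monomial that, together with $X_i^1$, is divisible by... one checks $\theta(um)$ is a proper divisor-type monomial contradicting minimality of $m$ unless $a=1$. I expect this last isolation/minimality bookkeeping to be the main obstacle: one must choose the multiplier $u$ carefully so that exactly one monomial survives $\theta$ and so that the surviving exponent of the squared variable genuinely drops, which requires being slightly careful that the padding $p$-th powers do not reintroduce the variable $X_i$ or spoil the isolation.
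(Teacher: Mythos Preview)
Your overall strategy matches the paper's, but there is a genuine gap in the monomial-isolation step. You assert that one can choose a monomial $u$ so that $um_0$ is a $p$-th power while $um$ is not, for every other $m\in\supp(f)$; this is false in general, since distinct monomials can have all exponents congruent modulo $p$ (for instance $X_1$ and $X_1^{p+1}$), and then no choice of $u$ separates them with a single application of $\theta$. You correctly sense that iteration is needed, but ``repeated if necessary'' is not a proof. The paper's fix is clean: given $g=\sum_i a_i\mu_i\in I$ with $\mu_i=X_1^{u_{i1}}\cdots X_n^{u_{in}}$, fix an index $i$ and choose $N$ large enough that for every $k\neq i$ there is some $j$ with $u_{kj}\not\equiv u_{ij}\pmod{p^N}$. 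Pick $v_j\in\{0,\dots,p^N-1\}$ with $u_{ij}+v_j\equiv 0\pmod{p^N}$, set $h=X_1^{v_1}\cdots X_n^{v_n}g\in I$, and apply $\theta^N$. Exactly the $i$-th term survives, and $\theta^N(h)$ divides $\mu_i$ because $(u_{ij}+v_j)/p^N\le u_{ij}$ for all $j$; hence $\mu_i\in I$.

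For the squarefree step the paper takes a much shorter route than your minimality argument: once $I$ is known to be a compatibly split monomial ideal, $S/I$ is $F$-split, hence reduced, so $I$ is radical; and a radical monomial ideal is squarefree. Your direct argument can presumably be made to work, but the bookkeeping you flag as ``the main obstacle'' is entirely avoidable.
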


\begin{proof}
That $\theta$ is an $F$-splitting is clear, and it is easy to check that a squarefree monomial ideal is compatibly split with respect to $\theta$. 

Viceversa, let $g=\sum_{i=1}^sa_i\mu_i\in I$, where $\mu_i=X^{u_{i1}}_1\cdots X^{u_{in}}_n$ and $a_i\in K\setminus \{0\}$. Pick $i\in\{1,\ldots ,s\}$. Our purpose is to show that, if $I$ is compatibly split ideals with respect to $\theta$, then $\mu_i\in I$. Clearly, there exists $N\in\NN$ such that, for all $i\neq k\in \{1,\ldots ,s\}$, $u_{kj}\not\equiv u_{ij}$ (mod $p^N$) for some $j\in\{1,\ldots ,n\}$.
For each $j=1,\ldots ,n$, let $0\leq v_j<p^N$ such that $u_{ij}\equiv -v_j$ modulo $p^N$, and call $h=X^{v_1}_1\cdots X^{v_n}_ng\in I$. Since $I$ is a compatibly split ideal with respect to $\theta$, then $\theta^N(h)\in I$. Notice that the monomials in the support of $\theta^N(h)$ correspond to those $k\in\{1,\ldots ,s\}$ such that $u_{kj}\equiv u_{ij}$ modulo $p^N$ for all $j=1,\ldots ,n$. Hence $\theta^N(h)$ is a monomial, precisely 
\[\theta^N(h)=\sqrt[p^{\tiny N}]{a_iX_1^{u_{i1}+v_1}\cdots X_n^{u_{in}+v_n}}.\]
Since $\frac{u_{ij}+v_j}{p^N}\leq u_{ij}$ for any $j=1,\ldots ,n$, $\mu_i$ is a multiple of $\theta^N(h)\in I$, so that $\mu_i\in I$. This shows that $I$ is a monomial ideal. That $I$ is radical follows from the fact that $S/I$ is $F$-split.
\end{proof}

The following proposition has already been proved in \cite[Lemma 2]{Knutson}. We provide a proof here for the convenience of the reader.

\begin{proposition}\label{p:splitin}
Let $w=(w_1,\ldots ,w_n)\in (\NN_{>0})^n$ be a weight vector. Then, for any $g\in S$, either $\Tr(\init_w(g))=0$ or $\Tr(\init_w(g))=\init_w(\Tr(g))$. 
\end{proposition}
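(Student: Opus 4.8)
The plan is to combine the explicit description of $\Tr$ on monomials with the fact that $\Tr$ transforms $w$-degrees by an increasing affine function, so that it carries the top $w$-graded piece of $g$ onto the top $w$-graded piece of $\Tr(g)$ unless it annihilates it entirely.

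First I would record how $\Tr=\phi_{p-1,\ldots ,p-1}$ acts on a monomial $X^u=X_1^{u_1}\cdots X_n^{u_n}$: writing $u_j=pb_j+a_j$ with $0\le a_j<p$, one has $X^u=X_1^{b_1}\cdots X_n^{b_n}\cdot X_1^{a_1}\cdots X_n^{a_n}$ in $F_*S$, whence
\[\Tr(X^u)=\begin{cases}X_1^{(u_1+1)/p-1}\cdots X_n^{(u_n+1)/p-1}&\text{if }u_j\equiv -1\ (\mathrm{mod}\ p)\text{ for all }j,\\ 0&\text{otherwise.}\end{cases}\]
Since $K$ is perfect, $\Tr$ is additive and $\Tr(cg)=c^{1/p}\Tr(g)$ for $c\in K$; thus applying $\Tr$ to $g=\sum_u c_uX^u$ deletes the monomials whose exponent is not congruent to $(-1,\ldots ,-1)$ modulo $p$ and rewrites each surviving one via the assignment $u\mapsto\bigl((u_1+1)/p-1,\ldots ,(u_n+1)/p-1\bigr)$, which is injective on such exponents, so that no cancellation takes place among the surviving terms. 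Setting $W=w_1+\cdots+w_n$, a direct computation gives, for $u$ with all $u_j\equiv -1\pmod p$,
\[\deg_w\bigl(\Tr(X^u)\bigr)=\tfrac1p\bigl(\deg_w(X^u)-(p-1)W\bigr),\]
a strictly increasing function of $\deg_w(X^u)$.

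Now split $g=\init_w(g)+g'$, where $\init_w(g)$ collects the terms of maximal $w$-degree $d$ and $g'$ the remaining terms. By the last display, $\Tr(\init_w(g))$ is either $0$ or a nonzero polynomial that is $w$-homogeneous of degree $\tfrac1p(d-(p-1)W)$, whereas every monomial occurring in $\Tr(g')$ has $w$-degree strictly below $\tfrac1p(d-(p-1)W)$; in particular no term of $\Tr(\init_w(g))$ can cancel against $\Tr(g')$. Hence, if $\Tr(\init_w(g))\neq 0$ it is exactly the top $w$-graded component of $\Tr(g)=\Tr(\init_w(g))+\Tr(g')$, i.e.\ $\init_w(\Tr(g))=\Tr(\init_w(g))$; and if $\Tr(\init_w(g))=0$ we are already in the first alternative. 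The crux is the degree identity for $\Tr(X^u)$ together with the injectivity that rules out cancellation; once these are in hand the statement follows formally, the only extra thing to check being that $(u_j+1)/p-1$ is a nonnegative integer whenever $u_j\ge 0$ and $u_j\equiv -1\pmod p$ (it equals $0$ at the minimal value $u_j=p-1$), so that everything stays inside $S$.
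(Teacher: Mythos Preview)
Your proof is correct and follows essentially the same approach as the paper's: both compute the action of $\Tr$ on monomials, observe that the induced transformation on $w$-degrees is order-preserving (you via the explicit affine formula $\deg_w(\Tr(X^u))=\tfrac1p(\deg_w(X^u)-(p-1)W)$, the paper via the equivalent biconditional \eqref{eq:splitin}), and conclude that the top $w$-homogeneous piece of $g$ is carried to the top $w$-homogeneous piece of $\Tr(g)$ whenever it is not annihilated. Your version is slightly more explicit in packaging the degree identity, and your injectivity remark, while not strictly needed for the argument (the degree separation already prevents cancellation between $\Tr(\init_w(g))$ and $\Tr(g')$), is a harmless clarification.
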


\begin{proof}
Given two vectors $(u_1,\ldots ,u_n), (v_1,\ldots ,v_n)\in \RR^n$ clearly we have:
\begin{align}\label{eq:splitin}
\sum_{i=1}^nu_iw_i\geq \sum_{i=1}^nv_iw_i \ & \Longleftrightarrow \ \sum_{i=1}^n\left(\frac{u_i+1}{p}-1\right)w_i\geq \sum_{i=1}^n\left(\frac{v_i+1}{p}-1\right)w_i. 
\end{align}
Recall that, if $\mu=X_1^{u_1}\cdots X_n^{u_n}$ is a monomial of $S$, $w(\mu)=\sum_{j=1}^nw_ju_j$, and that $w(f)=\max\{w(\nu):\nu\in\supp(f)\}$ for any $f\in R$.

Let $g=\sum_{i=1}^sa_i\mu_i\in I$, where $\mu_i\in\Mon(S)$ and $a_i\in K\setminus \{0\}$. Call $\mu_i=X_1^{u_{i1}}\cdots X_n^{u_{in}}$. If $\Tr(\init_w(g))\neq 0$, then there exists $i\in\{1,\ldots ,s\}$ such that $w(\mu_i)=w(g)$ and $u_{ij}\equiv -1$ (mod $p$) for all $j=1,\ldots ,n$.

Then $\Tr(\init_w(g))=\sum_{k\in A}\sqrt[p]{a_k}\Tr(\mu_k)$ where $A=\{k\in\{1,\ldots ,s\}:w(\mu_k)=w(g) \ \mbox{ and } \ u_{kj}\equiv -1 \ \forall \ j=1,\ldots ,n\}$. By our assumption $A$ is nonempty, indeed $i\in A$. On the other hand, $\Tr(g)=\sum_{k\in B}\sqrt[p]{a_k}\Tr(\mu_k)$ where $B=\{k\in\{1,\ldots ,s\}: u_{kj}\equiv -1 \ \forall \ j=1,\ldots ,n\}$. Of course $A\subset B\subset \{1,\ldots ,s\}$. Furthermore, using \eqref{eq:splitin}, $\{k\in B:w(\Tr(\mu_k)) \ \mbox{ is maximal}\}=\{k\in B:w(\mu_k) \ \mbox{ is maximal}\}=A$, so $\init_w(\Tr(g))=\sum_{k\in A}\sqrt[p]{a_k}\Tr(\mu_k)=\Tr(\init_w(g))$.
\end{proof} 

\begin{cor}\label{c:splitsqf}
Let $f\in S$ be such that there is a monomial order $<$ with $\init_<(f)=X_1^{p-1}\cdots X_n^{p-1}$. Then $f\star \Tr$ is an $F$-splitting of $S$, and $\init_<(I)\subset S$ is a squarefree monomial ideal for any compatibly split ideal (with respect to $f\star\Tr$) $I\subset S$.
\end{cor}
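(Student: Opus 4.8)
\textbf{Proof proposal for Corollary \ref{c:splitsqf}.}

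The plan is to reduce everything to the two results just established: Proposition \ref{standardsplitting} (the compatibly split ideals with respect to $\theta=X_1^{p-1}\cdots X_n^{p-1}\star\Tr$ are exactly the squarefree monomial ideals) and Proposition \ref{p:splitin} (the operator $\Tr$ essentially commutes with taking $\init_w$). First I would fix a weight vector $w\in(\NN_{>0})^n$ representing the monomial order $<$ on the relevant pieces, as in the preamble; in particular we may assume $\init_<(g)=\init_w(g)$ for every polynomial $g$ that shows up in the argument (there are only finitely many such, e.g. a generating set of $I$ together with the iterates needed). The hypothesis $\init_<(f)=X_1^{p-1}\cdots X_n^{p-1}$ then says $\init_w(f)=X_1^{p-1}\cdots X_n^{p-1}$, so in particular the coefficient of $X_1^{p-1}\cdots X_n^{p-1}$ in $f$ is $1$ and it is the unique monomial in $\supp(f)$ of maximal $w$-degree among those with all exponents $\equiv -1\pmod p$. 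By the Remark characterizing when $f\star\Tr$ is an $F$-splitting, this already forces $f\star\Tr$ to be an $F$-splitting of $S$: condition (1) holds since $X_1^{p-1}\cdots X_n^{p-1}\in\supp(f)$ with coefficient $1$, and condition (2) holds because any other monomial in $\supp(f)$ with all exponents $\equiv -1\pmod p$ has strictly smaller $w$-degree, hence cannot itself be $X_1^{p-1}\cdots X_n^{p-1}$; but we need condition (2) for \emph{all} such monomials to equal $X_1^{p-1}\cdots X_n^{p-1}$, so here I would instead argue directly that $\init_w(f\star\Tr\, (-)) = \theta$ in the appropriate sense.

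More precisely, the key claim I would prove is: for any $g\in S$, either $(f\star\Tr)(\init_w(g))=0$, or $\init_w\bigl((f\star\Tr)(g)\bigr)=\theta(\init_w(g))$. Granting this, the Corollary follows: if $I$ is compatibly split with respect to $f\star\Tr$, take $g\in I$; then $(f\star\Tr)(g)\in I$, so $\init_w((f\star\Tr)(g))\in\init_w(I)$, and by the claim this element is $\theta(\init_w(g))$ whenever it is nonzero. Letting $g$ range over $I$, the elements $\init_w(g)$ range over a generating set of $\init_w(I)$, so we conclude $\theta(\init_w(I))\subseteq\init_w(I)$, i.e. $\init_w(I)$ is compatibly split with respect to $\theta$. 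By Proposition \ref{standardsplitting}, $\init_w(I)$ is a squarefree monomial ideal, which is exactly what we want (recall $\init_<(I)=\init_w(I)$).

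It remains to establish the key claim, and this is where the real work is. Write $(f\star\Tr)(g)=\Tr(fg)$. The monomials of $fg$ are products $\mu\nu$ with $\mu\in\supp(f)$, $\nu\in\supp(g)$, and $\Tr$ picks out exactly those with all exponents $\equiv -1\pmod p$, outputting their $p$-th root (times the $p$-th root of the coefficient). The maximal $w$-degree among \emph{all} monomials of $fg$ is $w(f)+w(g)=w(X_1^{p-1}\cdots X_n^{p-1})+w(\init_w(g))$, achieved exactly by products of a maximal-degree monomial of $f$ with a maximal-degree monomial of $g$. The point is that $X_1^{p-1}\cdots X_n^{p-1}$ is the \emph{unique} maximal-$w$-degree monomial of $f$ lying in the ``all exponents $\equiv -1$'' locus (since $\init_w(f)$ is this single monomial), so among the monomials of $fg$ that survive $\Tr$, the ones of maximal $w$-degree are precisely $X_1^{p-1}\cdots X_n^{p-1}\cdot\nu$ with $\nu$ a maximal-$w$-degree monomial of $g$ that has all exponents $\equiv -1\pmod p$ --- and I would invoke (the proof of) Proposition \ref{p:splitin}, together with the equivalence \eqref{eq:splitin}, to see that $w$-maximality is preserved under $\Tr$, so that these contribute exactly $\init_w(\Tr(X_1^{p-1}\cdots X_n^{p-1}\cdot g))=\theta(\init_w(g))$, provided this is nonzero; if instead no such $\nu$ exists then $\theta(\init_w(g))=0$ and the first alternative of the claim holds. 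The main obstacle is the bookkeeping: one must rule out that lower-degree monomials of $f$ combine with higher-degree monomials of $g$ to produce, after $\Tr$, a term of $w$-degree matching or exceeding the one coming from $X_1^{p-1}\cdots X_n^{p-1}$; this follows from the monotonicity \eqref{eq:splitin} applied carefully to the shifted exponents $\tfrac{u_i+1}{p}-1$, but writing it out cleanly while tracking which products land in the relevant residue class is the delicate step.
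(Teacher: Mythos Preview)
Your overall strategy---show that $\init_<(I)$ is compatibly split with respect to $\theta=X_1^{p-1}\cdots X_n^{p-1}\star\Tr$ and then invoke Proposition~\ref{standardsplitting}---is exactly what the paper does. The difference is that you have overcomplicated the central step and, in doing so, introduced an error.

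Your ``key claim'' as stated is false. Take $n=1$, $p=2$, $f=X_1+1$, $g=X_1$. Then $(f\star\Tr)(\init_w(g))=\Tr(X_1^2+X_1)=1\neq 0$, yet $\theta(\init_w(g))=\Tr(X_1^2)=0$ while $\init_w\bigl((f\star\Tr)(g)\bigr)=\init_w(1)=1$, so the second alternative fails too. The trouble is the first alternative: it should read $\theta(\init_w(g))=0$, not $(f\star\Tr)(\init_w(g))=0$. (Your own proof sketch of the claim already slips into the correct version when you write ``if instead no such $\nu$ exists then $\theta(\init_w(g))=0$''.)

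Once you state the dichotomy correctly, all the ``bookkeeping'' you worry about evaporates: since $\init_w(fg)=\init_w(f)\init_w(g)$, the corrected claim is literally Proposition~\ref{p:splitin} applied to $fg$. Namely, either $\theta(\init_w(g))=\Tr(\init_w(f)\init_w(g))=\Tr(\init_w(fg))=0$, or it equals $\init_w(\Tr(fg))=\init_w\bigl((f\star\Tr)(g)\bigr)\in\init_w(I)$. This is the paper's one-line computation; there is no need to track which products of monomials land in a given residue class.

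On the side issue of why $f\star\Tr$ is an $F$-splitting: your argument for condition~(2) of the Remark stalls, but there is a clean finish. If $X_1^{u_1}\cdots X_n^{u_n}\in\supp(f)$ with all $u_i\equiv -1\pmod p$, then each $u_i\geq p-1$, so $X_1^{p-1}\cdots X_n^{p-1}$ divides this monomial and hence is $\leq$ it in any monomial order; but it is also $\geq$ it, being the leading term of $f$. Hence equality.
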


\begin{proof}
By Proposition \ref{standardsplitting}, it is enough to show that $\init_<(I)$ is a compatibly split ideal with respect to $X_1^{p-1}\cdots X_n^{p-1}\star \Tr$. Notice that $\init_<(I)$, as an $S$-submodule of $F_*S$, is generated by finitely many monomials, say $\mu_1,\ldots ,\mu_k$; so to check that $\init_<(I)$ is compatibly split with respect to $X_1^{p-1}\cdots X_n^{p-1}\star \Tr$ it is enough to check that $\Tr(X_1^{p-1}\cdots X_n^{p-1}\mu_i)\in \init_<(I)$ for all $i=1,\ldots ,k$. By definition of initial ideal, for any $i=1,\ldots ,k$ there are $g_i\in I$ such that $\init_<(g_i)=\mu_i$. Pick a weight vector $w\in(\NN_{>0}^n)$ such that $\init_w(g_i)=\init_<(g_i)$ for any $i=1,\ldots,k$ and $\init_w(f)=\init_<(f)$ (so $\init_w(I)=\init_<(I)$). Then either
$\Tr(X_1^{p-1}\cdots X_n^{p-1}\mu_i)=0$ or, using Proposition \ref{p:splitin},
\begin{align*}
\Tr(X_1^{p-1}\cdots X_n^{p-1}\mu_i)= & \Tr(\init_w(f)\init_w(g_i)) \\ 
 = & \Tr(\init_w(fg_i)) \\
 = & \init_w(\Tr(fg_i)))\in \init_w(\Tr(fI))\subset \init_w(I)=\init_<(I).
\end{align*}
\end{proof}

We end this subsection recalling the following useful criterion (see \cite[Lemma 1.6]{Fedder}).

\begin{proposition}\label{p:compatiblycriterion}
For any $f\in S$ and any ideal $I\subset S$, we have: 
\[(f\star\Tr)(I)\subset I \iff f\in I^{[p]}:I.\]
\end{proposition}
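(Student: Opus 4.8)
The plan is to unravel both sides of the claimed equivalence in terms of the free $F_*S$-module structure established earlier in the subsection, reducing everything to a statement about membership of elements in ideals. Recall that $\Hom_S(F_*S,S)$ is a free $F_*S$-module of rank $1$ generated by $\Tr$, via the isomorphism $\Phi$ sending $h$ to $h\star\Tr$. The key observation I would use is that, for an ideal $I\subset S$, the $F_*S$-submodule $\Hom_S(F_*S, I)\subset\Hom_S(F_*S,S)$ (those $S$-linear maps whose image lands in $I$) corresponds under $\Phi^{-1}$ to the $F_*S$-submodule $\{h\in F_*S : h\star\Tr \text{ has image in } I\}$, and this submodule is precisely $I^{[p]} = (a^p : a\in I)S$ viewed inside $F_*S$ --- equivalently, $\Tr(hI)\subset S$ always, but $\Tr(hS)\subset I$ iff $h\in I^{[p]}$. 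This last fact follows because $\Tr$ generates $\Hom_S(F_*S,S)$ freely, so $\Tr(hS)\subset I$ forces $h$ itself to lie in the $F_*S$-span of the generators of $\Hom_S(F_*S,I)$; concretely, writing everything in the monomial basis $X_1^{i_1}\cdots X_n^{i_n}$ with $i_j<p$ and using $\phi_{i_1,\dots,i_n} = X_1^{p-i_1-1}\cdots X_n^{p-i_n-1}\star\Tr$, one checks that $\Tr(hm)\in I$ for all monomials $m$ iff all the ``$p$-th-power-coordinate'' components of $h$ relative to this basis lie in $I$, which is exactly the condition $h\in I^{[p]}$.

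Given that, I would argue as follows. First, $(f\star\Tr)(I)\subset I$ means $\Tr(fg)\in I$ for every $g\in I$. Since $I$ is generated by finitely many elements, and since $\Tr$ is $S$-linear after twisting by Frobenius (i.e.\ $\Tr(a^p x) = a\,\Tr(x)$), the condition $\Tr(fI)\subset I$ is equivalent to: the element $f$, multiplied by any element of $I$, is carried into $I$ by $\Tr$. Now $fg$ for $g\in I$: I want to say $\Tr(f g)\in I$ for all $g\in I$ iff $f\cdot I\subset$ (the preimage under $\Tr$ of $I$) $= I^{[p]}$, i.e.\ $fI\subset I^{[p]}$, i.e.\ $f\in I^{[p]}:I$. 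The point ``$\Tr$-preimage of $I$ equals $I^{[p]}$'' is the content of the previous paragraph: $\Tr(hS)\subset I \iff h\in I^{[p]}$, so in particular $\Tr(fg)\in I$ for all $g$ ranging over $S$ would say $f\in I^{[p]}$ --- but we only need it for $g\in I$, hence we get $fI\subset I^{[p]}$ rather than $f\in I^{[p]}$, which is precisely the colon condition.

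So the sequence of steps is: (1) record that $\Tr(a^p x)=a\Tr(x)$, so $\Tr$ is a twisted-$S$-linear map and $\Tr(fI)\subset I$ iff $\Tr(f\cdot I)\subset I$ as $S$-modules; (2) prove the key lemma that for $h\in F_*S$ one has $\Tr(hS)\subset I$ iff $h\in I^{[p]}$, by expanding $h$ in the free monomial basis of $F_*S$ and testing against the dual basis elements $\phi_{i_1,\dots,i_n}$, noting each $\phi_{i_1,\dots,i_n}$ is $X_1^{p-1-i_1}\cdots X_n^{p-1-i_n}\star\Tr$; (3) combine: $(f\star\Tr)(I)\subset I \iff \Tr(fg)\in I\ \forall g\in I \iff fg\in I^{[p]}\ \forall g\in I$ (applying the lemma with $h=fg$, or more efficiently testing the generators of $I$) $\iff fI\subset I^{[p]} \iff f\in I^{[p]}:I$. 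I expect step (2) --- pinning down that the $\Tr$-preimage of $I$ is exactly $I^{[p]}$ and not something larger --- to be the main obstacle, since it is where one genuinely uses that $K$ is perfect (so that $p$-th roots exist and $F_*S$ is free on the stated monomial basis) and where the bookkeeping with the dual basis $\phi_{i_1,\dots,i_n}$ must be done carefully; everything else is formal. As noted, this is Fedder's original argument from \cite[Lemma 1.6]{Fedder}, so I would simply cite it for the details once the above reduction is in place.
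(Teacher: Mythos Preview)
The paper does not actually prove this proposition: it simply states it and cites \cite[Lemma 1.6]{Fedder}. Your proposal is a correct sketch of Fedder's original argument, which you yourself cite at the end, so there is nothing to compare. One small point worth tightening in step (3): from ``$\Tr(fg)\in I$ for all $g\in I$'' you cannot immediately invoke the lemma with $h=fg$ for a single fixed $g$, since the lemma requires $\Tr(hS)\subset I$; the missing observation is that $I$ is an ideal, so $gS\subset I$ and hence $\Tr(fg\cdot S)\subset I$ follows from the hypothesis, after which the lemma applies. With that clarification your chain of equivalences is complete.
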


\subsection{Conclusions}

In this subsection we gather the conclusions we can get from the previous subsection. We will not assume anymore that $K$ is a perfect field of positive characteristic. It is useful to recall that, if $\phi:A\to B$ is a flat homomorphism of Noetherian rings, then for any two ideals $I,J\subset A$ one has (cf. \cite[Theorem 7.4]{matsu}:
\[(I\cap J)B=IB\cap JB, \ \ \ \ \ (I:J)B=IB:JB.\]

\begin{theorem}\label{t:charp}
Let $S=K[X_1,\ldots ,X_n]$ be the polynomial ring in $n$ variables over a field $K$ of characteristic $p>0$. Let $I\subset S$ be an ideal, $<$ a monomial order of $S$. If $\init_<(I^{[p]}:I)$ contains $X_1^{p-1}\cdots X_n^{p-1}$, then $\init_<(I)$ is a squarefree monomial ideal.
\end{theorem}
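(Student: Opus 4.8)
The plan is to reduce to the perfect-field case and then apply Corollary \ref{c:splitsqf}. First I would dispose of the base field: the hypotheses and conclusion are insensitive to replacing $K$ by its perfect closure $\overline{K}$ (or just by $K^{1/p^\infty}$), since $\overline{K}\otimes_K S$ is faithfully flat over $S$, the formation of $\init_<$ commutes with such base change (a monomial order on $S$ extends to one on $\overline{K}\otimes_K S$ and the leading terms are unchanged), and by the flatness formula quoted just before the theorem, $(\overline{K}\otimes_K I)^{[p]}:(\overline{K}\otimes_K I)=\overline{K}\otimes_K(I^{[p]}:I)$. Also $\init_<(\overline{K}\otimes_K I)$ is squarefree iff $\init_<(I)$ is. So without loss of generality $K$ is perfect.

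Next, let $f\in I^{[p]}:I$ be an element whose leading monomial $\init_<(f)$ equals $X_1^{p-1}\cdots X_n^{p-1}$ — such an $f$ exists because that monomial lies in $\init_<(I^{[p]}:I)$ by hypothesis. By Proposition \ref{p:compatiblycriterion}, the condition $f\in I^{[p]}:I$ is exactly the statement that $(f\star\Tr)(I)\subseteq I$, i.e. $I$ is compatibly split with respect to the map $f\star\Tr\in\Hom_S(F_*S,S)$. By Corollary \ref{c:splitsqf}, since $\init_<(f)=X_1^{p-1}\cdots X_n^{p-1}$, the map $f\star\Tr$ is an $F$-splitting of $S$ and $\init_<(I)$ is a squarefree monomial ideal, as $I$ is a compatibly split ideal with respect to $f\star\Tr$. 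This is the whole argument.

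The main point to check carefully — the only real content beyond citing the earlier results — is the base-change step, and in particular that an element $f$ with the prescribed leading monomial can be chosen already over $K$ rather than only over $\overline K$. In fact this is automatic: if $g\in(\overline K\otimes_K I)^{[p]}:(\overline K\otimes_K I)$ has leading monomial $X_1^{p-1}\cdots X_n^{p-1}$, then because this colon ideal is $\overline K\otimes_K(I^{[p]}:I)$, writing $g=\sum_j c_j\otimes g_j$ with $c_j\in\overline K$ linearly independent over $K$ and $g_j\in I^{[p]}:I$, the leading monomial of $g$ is the $<$-largest among the $\init_<(g_j)$, so some single $g_j\in I^{[p]}:I$ already has leading monomial $X_1^{p-1}\cdots X_n^{p-1}$. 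Hence one may simply run Corollary \ref{c:splitsqf} over $K$ directly, provided one first observes $K$ perfect is not needed for the colon characterization but only for the structure theory of $F_*S$; thus the cleanest writeup is: extend scalars to $K^{1/p}$ (or $\overline K$) so that the subsection's standing hypothesis applies, note the hypothesis descends an explicit witness $f$ as above, invoke Corollary \ref{c:splitsqf} to get $\init_<(\overline K\otimes_K I)$ squarefree, and descend the conclusion back to $S$. I do not anticipate any serious obstacle; the work is entirely in assembling Propositions \ref{p:compatiblycriterion}, \ref{p:splitin} and Corollary \ref{c:splitsqf} and handling the (routine) flat base change to a perfect field.
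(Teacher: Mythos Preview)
Your proposal is correct and follows essentially the same route as the paper: pick $f\in I^{[p]}:I$ with $\init_<(f)=X_1^{p-1}\cdots X_n^{p-1}$, extend scalars to the perfect closure so that Proposition~\ref{p:compatiblycriterion} and Corollary~\ref{c:splitsqf} apply (using flatness to identify the colon ideal after base change), and then descend the squarefreeness of $\init_<(I')$ back to $\init_<(I)$ via the invariance of Gr\"obner bases under field extension. The paper chooses $f$ over $K$ first and then extends, while you initially phrase it as reducing to the perfect case and then choosing $f$, but your closing remarks show you recognize these are equivalent; the argument is the same.
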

\begin{proof}
Let $f\in I^{[p]}:I$ such that $\init_<(f)=X_1^{p-1}\cdots X_n^{p-1}$. Let $K'$ be the perfect closure of $K$, $S'=S\otimes_KK'$ and $I'=IS'$. Since the inclusion $S\subset S'$ is flat, then $(I^{[p]}:I)S'=I'^{[p]}:I'$, so $(f\star\Tr)(I')\subset I'$ by Proposition \ref{p:compatiblycriterion}. So, by Corollary \ref{c:splitsqf} $\init_<(I')$ is a squarefree monomial ideal. Since the Buchberger algorithm is not affected by field extensions, we conclude that $\init_<(I)$ is a squarefree monomial ideal. 
\end{proof}

The proof of the next result is inspired by the results in \cite{seccia2020}, we recall that the $m$th symbolic power of an ideal $I$ of a Noetherian ring $R$ is the ideal $I^{(m)}:=I^m(T^{-1}R)\cap R$ where $T$ is the complement in $R$ of the union of the minimal prime ideals of $I$. In other words, $r\in I^{(m)}$ if and only if there exists $x\in R$ avoiding all the minimal prime ideals of $I$ such that $rx\in I^m$.

\begin{theorem}\label{t:symb}
Let $S=K[X_1,\ldots ,X_n]$ be the polynomial ring in $n$ variables over a field $K$ (not necessarily of positive characteristic). Let $I\subset S$ be an ideal, $<$ a monomial order of $S$, and call $h=\max\{\height(\pp):\pp\in\Mini(I)\}$. If $\init_<(I^{(h)})$ contains a squarefree monomial, then $\init_<(\sqrt{I})$ is a squarefree monomial ideal.
\end{theorem}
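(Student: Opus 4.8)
The plan is to reduce Theorem~\ref{t:symb} to Theorem~\ref{t:charp} by passing to positive characteristic and by relating the symbolic power $I^{(h)}$ to the colon ideal $J^{[p]}:J$ for $J=\sqrt I$. First I would reduce to the case where $I$ is radical: since $\init_<(I^{(h)})$ depends only on $\sqrt I$ (the minimal primes of $I$ and of $\sqrt I$ coincide, hence so do their symbolic powers, as $(\sqrt I)^{(m)}=I^{(m)}$ when... actually one must check $I^{(h)}=(\sqrt I)^{(h)}$; this is standard since both are $\bigcap_{\pp\in\Mini(I)}(\text{localization at }\pp)$-contractions and the minimal primes agree), we may replace $I$ by $J=\sqrt I$ and prove directly that $\init_<(J)$ is squarefree. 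The heart of the argument is the following characteristic-$p$ claim: if $J$ is a radical ideal of height $h$ and $p$ is a prime, then $J^{[p]}:J \supseteq (J^{(h)})^{[\text{something}]}$ or more precisely there is an inclusion allowing one to produce, from a squarefree monomial in $\init_<(J^{(h)})$, the monomial $X_1^{p-1}\cdots X_n^{p-1}$ inside $\init_<(J^{[p]}:J)$ after raising to a suitable power. The mechanism I expect is: a squarefree monomial $\mu=X_{i_1}\cdots X_{i_r}$ in $\init_<(J^{(h)})$ gives, via $\mu^{p-1}$ (or a product over the variables), a way to hit all exponents $p-1$; and the symbolic-power-to-colon relation of Fedder type (in the spirit of \cite{seccia2020}) says $g\in J^{(h)}$ forces $g^{p-1}\cdot(\text{unit at each minimal prime})\in J^{[p]}:J$, because locally at a minimal prime $\pp$ of height $h$ the ring $S_\pp$ is regular and $J^{(h)}S_\pp = (JS_\pp)^{?}$... this is where I would need to be careful.

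More concretely, the key step is: for a radical ideal $J$ of height $h$, one has $(J^{(h)})^{[p]}:J^{(h)}\subseteq$ (something), but better, I would use that $J^{[p]}:J = \bigcap_{\pp\in\Mini(J)} ((J^{[p]}:J)S_\pp \cap S)$ roughly, and localize: at each minimal prime $\pp$ of maximal height $h$, $S_\pp$ is a regular local ring of dimension $h$, $JS_\pp$ is its maximal ideal, and an easy Fedder-type computation shows that if $g$ has large order at $\pp$ (which holding $g\in J^{(h)}$ guarantees, with order $\geq h$ in an appropriate sense) then $g^{p-1}\in (JS_\pp)^{[p]}:JS_\pp$. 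Combined with the fact that away from $\Mini(J)$ of height exactly $h$ there is an element $c$ not vanishing there, $c\cdot g^{p-1}\in J^{[p]}:J$ globally for suitable $g\in J^{(h)}$. Then if $g\in J^{(h)}$ is chosen (or built from generators) so that $\init_<(g)$ is a squarefree monomial $\mu$, one gets $\init_<(c g^{p-1})$ divisible by $\mu^{p-1}$ up to the contribution of $c$; taking $n$th-type products or a large enough power $p^N$ in the style of Proposition~\ref{standardsplitting}, one arranges $X_1^{p-1}\cdots X_n^{p-1}\in \init_<(J^{[p]}:J)$. Strictly: I would pick a single squarefree monomial $\nu\in\init_<(J^{(h)})$, write $\nu=\init_<(g)$ with $g\in J^{(h)}$, and show $\init_<\big(g^{p-1}\cdot(\text{product of all }X_i)^{p-1}\cdot c\big)$ can be made equal to $X_1^{p-1}\cdots X_n^{p-1}$ for suitably large $p$ — using that for $p\gg 0$ we may take $p\equiv 1$ with respect to the exponents involved, so that multiplying by $(X_1\cdots X_n)^{p-1}$ and clearing is harmless. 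Once $X_1^{p-1}\cdots X_n^{p-1}\in\init_<(J^{[p]}:J)$, Theorem~\ref{t:charp} finishes the characteristic-$p$ case.

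For the characteristic-$0$ (or arbitrary) case I would use a standard spreading-out / reduction-mod-$p$ argument: the hypothesis and conclusion are about Gr\"obner bases and syzygies of finitely many polynomials with coefficients in a finitely generated $\ZZ$-subalgebra $A$ of $K$; for all but finitely many primes $p$, reduction modulo a maximal ideal of $A$ lying over $p$ preserves the initial ideal of $J$ and of $J^{(h)}$ (Buchberger's algorithm is unaffected by generic base change, and symbolic powers are preserved after localizing $A$ further, possibly using generic freeness), so the mod-$p$ hypothesis holds, giving $\init_<(J)$ squarefree in characteristic $p$, hence $\init_<(J)$ is squarefree over $K$. The main obstacle I foresee is the symbolic-power-to-colon-ideal inclusion: precisely pinning down the statement "$g\in J^{(h)}\Rightarrow g^{p-1}\cdot(\text{unit})\in J^{[p]}:J$" with the correct exponent and the correct handling of embedded-type behavior and of minimal primes of height $<h$, since $J^{(h)}$ only sees the height-$h$ components. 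I would handle the lower-height components by localizing and noting that $J^{[p]}:J$ localized at such a prime $\pp'$ of height $h'<h$ is automatically large because $g^{p-1}$ has order $\geq h > h'$ there too... one must verify $J^{(h)}S_{\pp'}$ lands deep enough in powers of $\pp' S_{\pp'}$, which requires that the height-$h$ components through $\pp'$ force high order; this is the delicate bookkeeping, and it is essentially the content of the argument in \cite{seccia2020} that I would adapt.
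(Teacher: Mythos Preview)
Your architecture matches the paper's: reduce to $I$ radical, in characteristic $p$ produce $X_1^{p-1}\cdots X_n^{p-1}$ inside $\init_<(I^{[p]}:I)$ and invoke Theorem~\ref{t:charp}, then spread out for characteristic~$0$. But the execution of the characteristic-$p$ step has a genuine gap, and the fix you propose would not work.

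Two preliminary corrections. The reduction to radical only needs the inclusion $I^{(h)}\subseteq(\sqrt I)^{(h)}$; equality fails in general (take $I=(x^2)$), but the inclusion suffices since a squarefree monomial in $\init_<(I^{(h)})$ then also lies in $\init_<((\sqrt I)^{(h)})$. More importantly, you can arrange $\init_<(f)=X_1\cdots X_n$ for free: if $f\in I^{(h)}$ has squarefree initial term $X_{i_1}\cdots X_{i_r}$, multiply $f$ by the product of the remaining variables; since $I^{(h)}$ is an ideal the product stays in $I^{(h)}$, and now $\init_<(f^{p-1})=X_1^{p-1}\cdots X_n^{p-1}$ directly. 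None of the ad~hoc multiplications by $(X_1\cdots X_n)^{p-1}$ or congruence conditions on $p$ are needed.

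The real issue is your plan to show $f^{p-1}\in I^{[p]}:I$. You introduce an auxiliary element $c$ avoiding the minimal primes of height $<h$, but then $\init_<(cf^{p-1})$ picks up an uncontrolled factor $\init_<(c)$, and you can no longer conclude that $X_1^{p-1}\cdots X_n^{p-1}$ itself lies in $\init_<(I^{[p]}:I)$. In fact no multiplier is needed, and your worry about lower-height primes is unfounded. For \emph{every} $\pp\in\Mini(I)$ (of any height $\le h$) one has $f\in I^{(h)}\subseteq \pp^{(h)}$, hence $f\in(\pp S_\pp)^h$; since $S_\pp$ is regular local of dimension $\height(\pp)\le h$, its maximal ideal $\pp S_\pp$ has at most $h$ generators, and for any $g\in I\subseteq\pp$ the element $f^{p-1}g$ lies in $(\pp S_\pp)^{h(p-1)+1}$. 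A pigeonhole on the exponents of at most $h$ generators (if each exponent were $\le p-1$ the total degree would be $\le h(p-1)$) forces $f^{p-1}g\in(\pp S_\pp)^{[p]}$. Contracting back to $S$, using flatness of Frobenius (Kunz) to get $\pp^{[p]}:a^p=(\pp:a)^{[p]}=\pp^{[p]}$ for $a\notin\pp$, yields $f^{p-1}g\in\pp^{[p]}$; intersecting over $\pp$ and using flatness of Frobenius once more gives $f^{p-1}g\in\bigcap_\pp\pp^{[p]}=\big(\bigcap_\pp\pp\big)^{[p]}=I^{[p]}$. So $f^{p-1}\in I^{[p]}:I$ on the nose, and Theorem~\ref{t:charp} applies. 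The ``delicate bookkeeping'' you anticipate for primes of height $<h$ does not arise: the same pigeonhole works there, indeed with room to spare.
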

\begin{proof}
Of course we can assume that $I$ is radical, since $I^{(h)}\subset(\sqrt{I})^{(h)}$.

\bigskip

{\bf Let us first assume that $K$ has characteristic $p>0$}. Let $f\in I^{(h)}=\bigcap_{\pp\in\Mini(I)}\pp^{(h)}$ such that $\init(f)$ is a squarefree monomial. Of course we can assume $\init(f)=X_1\cdots X_n$, so that $\init(f^{p-1})=\init(f)^{p-1}=X_1^{p-1}\cdots X_n^{p-1}$, so if we show that $f^{p-1}\in I^{[p]}:I$ we are done by Theorem \ref{t:charp}. Pick $g\in I$. Then $g\in \pp$ for any minimal prime ideal $\pp$ of $I$. So let us see $g\in\pp S_{\pp}$. Since $S_{\pp}$ is a regular local ring of dimension $\height(\pp)\leq h$, $\pp S_{\pp}$ is generated by at most $h$ elements. Hence, since $f\in (\pp S_{\pp})^h$, by the pigeonhole principle, then 
\[f^{p-1}g\in (\pp S_{\pp})^{[p]}.\]
Hence there exists $a\in S\setminus \pp$ such that $af^{p-1}g\in\pp^{[p]}$. In particular $a^pf^{p-1}g\in\pp^{[p]}$, that is $f^{p-1}g\in\pp^{[p]}:a^p$. So, since $S$ is a regular ring, the Frobenius map $F:S\to S$ is flat by the theorem of Kunz (cf. \cite[Corollary 8.2.8]{BH93}) $f^{p-1}g\in(\pp:a)^{[p]}=\pp^{[p]}$, and
\[f^{p-1}g\in \bigcap_{\pp\in\Mini(I)}\pp^{[p]}=\left(\bigcap_{\pp\in\Mini(I)}\pp\right)^{[p]}=I^{[p]}.\]  
This concludes the proof if $K$ has positive characteristic.

\bigskip

{\bf If $K$ has characteristic 0}, let $\overline{K}$ denote the algebraic closure of $K$, $\overline{S}=\overline{K}[X_1,\ldots ,X_n]$ and $\overline{I}=I\overline{S}$. Since $K$, having characteristic 0, is perfect, $\overline{I}$ is a radical ideal. Moreover we have an equality of sets $\{\height(\pp):\pp\in\Mini(\overline{I})\}=\{\height(\pp):\pp\in\Mini(I)\}$: indeed, given a height $c$ prime ideal $\pp\subset S$, $\pp \overline{S}$ is a (perhaps not prime) ideal of $\overline{S}$ having all the minimal primes of height $c$, and the prime ideals of $\Mini(\overline{I})$ are minimal over some $\pp\overline{S}$ with $\pp\in\Mini(I)$. So $h=\max\{\height(\pp):\pp\in\Mini(\overline{I})\}$. Next, fix $f\in I^{(h)}$ such that $\init(f)$ is a squarefree monomial; so there exists $g\in S\setminus \left(\bigcup_{\pp\in\Mini(I)}\pp\right)$ such that $fg\in I^h$. Clearly, viewing $f$ and $g$ as polynomials of $\overline{S}$, we have $fg\in \overline{I}^h$. If $g$ were in some $\pp\in\Mini(\overline{I})$, then it would also belong to $\pp\cap S$, which is a minimal prime ideal of $I$, and we know this is not the case. So $fg\in \overline{I}^h$ and $g$ is not in $\bigcup_{\pp\in\Mini(\overline{I})}\pp$, so $f\in \overline{I}^{(h)}$. Therefore, {\bf we can assume that $K$ is algebraically closed}.

\vskip 2mm

Let $\{f_1,\ldots ,f_m\}$ be the reduced Gr\" obner basis of $I$ with respect to $<$. We need to show that $\init(f_i)$ is a squarefree monomial for each $i=1,\ldots ,m$. 

To this purpose, let us fix $f\in I^{(h)}$ such that $\init(f)$ is a squarefree monomial with coefficient 1 in $f$; so  there is $g\in S$ such that $fg\in I^h$ and $g$ does not belong to any of the minimal prime ideals of $I$. 
We can find a finitely generated $\ZZ$-algebra $Z\subset K$ such that the coefficients of $f,g$, those of all the $f_i$'s and those of the polynomials of the reduced Gr\"obner bases of the minimal prime ideals of $I$ are in $Z$. In particular, if $S_Z=Z[X_1,\ldots ,X_n]$ and $J_Z=J\cap S_Z$ for any ideal $J\subset S$, we have $I_ZS=I$ and $\pp_ZS=\pp$ for all $\pp\in\Mini(I)$. 

We also introduce the following notation: for all prime ideals $P\subset Z$, $Q(P)$ denotes the field of fractions of $Z/P$ (we write just $Q$ if $P$ is the zero ideal), $S_{Q(P)}=Q(P)[X_1,\ldots ,X_n]$ and $J_{Q(P)}$ stands for $J_ZS_{Q(P)}$ for any ideal $J\subset S$. Also, we will write $\overline{a}$ for the image in $S_{Q(P)}$ of an element $a\in S_Z$. Notice that a prime ideal  $P\subset Z$ contains at most one prime number $p\in\NN$; in this case, $Q(P)$ is a field of characteristic $p$.

\vskip 2mm

Notice that {\bf for any prime number $p>0$ and for all $P\in\Mini(pZ)$ we have that $\{\overline{f_1},\ldots ,\overline{f_m}\}$ is a (reduced) Gr\"obner basis of $I_{Q(P)}$}. This is simply because the coefficient of $\init(f_i)$ in $f_i\in S_Z$ is 1 for all $i=1,\ldots ,m$, so if the $S$-polynomials between the $f_i$'s reduce to zero modulo $\{f_1,\ldots ,f_m\}$ in $S$, they reduce to zero modulo $\{\overline{f_1},\ldots ,\overline{f_m}\}$ in $S_{Q(P)}$ as well.

\vskip 2mm

Similarly to above, notice that for any prime number $p>0$ and for all $P\in\Mini(pZ)$ we have that $\{\overline{g_1},\ldots ,\overline{g_k}\}$ is the reduced Gr\"obner basis of $\pp_{Q(P)}$ provided that $\{g_1,\ldots ,g_k\}$ is the reduced Gr\" obner basis of $\pp$, for any $\pp\in\Mini(I)$. In particular, {\bf for any prime number $p>0$ and for all $P\in\Mini(pZ)$ we have $\height(\pp)=\height(\pp_{Q(P)})$ and $\overline{g}\notin \pp_{Q(P)}$ for all $\pp\in\Mini(I)$}.

\vskip 2mm

We claim that {\bf there exists $N\in \NN$ such that, for all prime numbers $p>N$ and $P\in\Mini(pZ)$, we have that $I_{Q(P)}=\bigcap_{\pp\in\Mini(I)}\pp_{Q(P)}$}. The intersection of two polynomial ideals $A$ and $B$ of $S_{Q(P)}$ can be performed by computing a Gr\"obner basis of $At+B(1-t)\in S_{Q(P)}[t]$, so the claim follows by \cite[Lemma 2.3]{seccia2020} (see the proof of Proposition 2.2 in \cite{seccia2020} for the same application).

\vskip 2mm

We claim that, {\bf for all $\pp\in \Mini(I)$, there exists $N_{\pp}\in \NN$ such that, for all prime numbers $p>N_{\pp}$ and $P\in \Mini(pZ)$, $\pp_{Q(P)}$ is a prime ideal of $S_{Q(P)}$}.
To see this, consider the morphism of schemes 
\[\phi:X=\Spec(S_Z/\pp_Z)\to Y=\Spec(Z).\]
Notice that we have that $\phi$ is of finite type and $Y$ is irreducible. Since $\pp_QS=\pp_ZS$ is a prime ideal, then the special fibre $X_{\eta}$ ($\eta$ is the generic point of $Y$, namely the zero ideal of $Z$) is geometrically irreducible and geometrically reduced. Hence by Lemma 37.24.4 of \cite[\href{https://stacks.math.columbia.edu/tag/0574}{Tag 0574}]{stacks-project} and Lemma 37.25.5 of  \cite[\href{https://stacks.math.columbia.edu/tag/0553}{Tag 0553}]{stacks-project} there exists a nonempty open subset $U\subset Y$ such that $X_y$ is geometrically reduced and geometrically irreducible for all $y\in U$. In other words, $\pp_{Q(P)}$ is a geometrically prime ideal of $S_{Q(P)}$ for all prime ideals $P\in U$. We have proved the claim since there exists a nonzero ideal $J\subset Z$ such that $U=\{y\in Y: y\not\supset J\}$, so all but finitely many prime ideals of height 1 in $Z$ belong to $U$.

\vskip 2mm

Gathering everything, if we pick a prime number $p>\max\{N,N_{\pp}:\pp\in\Mini(I)\}$, we proved that any $P\in\Mini(pZ)$ is a prime ideal of $Z$ such that:
\begin{itemize}
\item $Q(P)$ is a field of characteristic $p>0$.
\item $I_{Q(P)}$ is a radical ideal with $\Mini(I_{Q(P)})=\{\pp_{Q(P)}:\pp\in \Mini(I)\}$.
\item The maximum height of a minimal prime ideal  of $I_{Q(P)}$ is $h$.
\item $\init((I_{Q(P)})^{(h)})$ contains a squarefree monomial.
\end{itemize} 
The above facts, and what previously proved in characteristic $p>0$, tell us that $\init(I_{Q(P)})$ is a squarefree monomial ideal. That is, $\init(f_i)$ is a squarefree monomial for all $i=1,\ldots ,m$, i.e. $\init(I)$ is a squarefree monomial ideal.
\end{proof}

\section{Some $G$-deforming $F$-singularities}

In this section we will prove that being $F$-rational or strongly $F$-injective are $G$-deforming properties. These facts depend on the fact that these properties deform in the local case by, respectively, \cite[Theorem 4.2(h)]{Hochster-Huneke} and \cite[Theorem 4.2(i)]{Ma-Qui}. We show that they also deform in the graded case accordingly with the nonlocal definitions, and to this purpose we proved Theorem \ref{F-rat'l-homogeneous} and Proposition \ref{p:strongly F-injective}, that are expected but we could not find in the literature. (We should point out that it would be possible to prove that $F$-rational or strongly $F$-injective are $G$-deforming properties in a more direct way, but we find Theorem \ref{F-rat'l-homogeneous} and Proposition \ref{p:strongly F-injective} interesting by themselves).

 A sequence of elements $x_1,\ldots, x_n$ in a ring $R$ are called \emph{parameters} if they can be extended to a  system of parameters in every 
 local ring $R_{\mathfrak{p}}$ of $R$ where $\mathfrak{p}$  is a prime ideal of $R$ that contains them. An ideal of $R$ is said to be a \emph{parameter ideal} if it can be generated by parameters.
 
If $R$ has prime characteristic $p$, the tight closure of an ideal $I\subset R$ is the ideal $I^*$ formed by the elements $r\in R$ such that there exists $c\in R\setminus\bigcup_{\mathfrak{p}\in \mathrm{Min}(R)}\mathfrak{p}$ such that $cr^{p^e}\in I^{[p^e]}=(x^{p^e}:x\in I)$ for any positive integer $e\gg 0$. We say that $I$ is tightly closed if $I=I^*$.

\begin{definition}
 A ring of prime characteristic is \emph{$F$-rational} if every parameter ideal is tightly closed.
\end{definition}
The following Lemma is well-known. For the convenience of the reader we include a proof.
\begin{lemma}
\label{F-rational-ff-map}
 Let $f:R \to S$ be a faithfully flat map. If $S$ is $F$-rational so is $R$.\\
\end{lemma}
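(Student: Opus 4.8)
The strategy is the standard two-step reduction showing each of the defining conditions of $F$-rationality descends along a faithfully flat map $f\colon R\to S$. Recall $R$ is $F$-rational if every parameter ideal of $R$ is tightly closed. So first I would take a parameter ideal $\mathfrak{a}=(x_1,\ldots,x_k)R$ of $R$ and an element $r\in\mathfrak{a}^*$, and aim to conclude $r\in\mathfrak{a}$. The first key point is that $\mathfrak{a}S$ is again a parameter ideal of $S$: since $f$ is faithfully flat, the fibers $S_{\mathfrak{q}}/\mathfrak{p}S_{\mathfrak{q}}$ (for $\mathfrak{q}\in\Spec S$ lying over $\mathfrak{p}\in\Spec R$) are Cohen–Macaulay-free of the right kind — more precisely, for any prime $\mathfrak{q}\supseteq\mathfrak{a}S$ with contraction $\mathfrak{p}$, the images of $x_1,\ldots,x_k$ form part of a system of parameters in $S_{\mathfrak{q}}$, because $\dim S_{\mathfrak{q}}=\dim R_{\mathfrak{p}}+\dim(S_{\mathfrak{q}}/\mathfrak{p}S_{\mathfrak{q}})$ and the $x_i$ are parameters in $R_{\mathfrak{p}}$ while the fiber is unmixed of the complementary dimension. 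So $\mathfrak{a}S$ is a parameter ideal of $S$, and since $S$ is $F$-rational, $\mathfrak{a}S$ is tightly closed in $S$.

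The second key point is that tight closure is compatible with faithfully flat base change in the easy direction we need: if $r\in\mathfrak{a}^*$ in $R$, then $r\in(\mathfrak{a}S)^*$ in $S$. Here I would use that there is $c\in R$ avoiding all minimal primes of $R$ with $c\,r^{p^e}\in\mathfrak{a}^{[p^e]}$ for all $e\gg 0$; the image of $c$ in $S$ avoids all minimal primes of $S$ — this uses faithful flatness, since a minimal prime $\mathfrak{q}$ of $S$ contracts to a prime $\mathfrak{p}$ of $R$ with $S_{\mathfrak{q}}$ faithfully flat over $R_{\mathfrak{p}}$, forcing $\mathfrak{p}$ to be minimal (if $\mathfrak{p}$ properly contained some prime $\mathfrak{p}'$, lifting $\mathfrak{p}'$ into $S_{\mathfrak{q}}$ would contradict minimality of $\mathfrak{q}$). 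Then $c\,r^{p^e}\in\mathfrak{a}^{[p^e]}S=(\mathfrak{a}S)^{[p^e]}$ shows $r\in(\mathfrak{a}S)^*$. Combining with the previous step, $r\in\mathfrak{a}S$.

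Finally I would descend the membership $r\in\mathfrak{a}S$ back to $r\in\mathfrak{a}$ using faithful flatness: for any ideal $\mathfrak{a}\subset R$ one has $\mathfrak{a}S\cap R=\mathfrak{a}$ (equivalently $R\to S$ is pure, or argue via $\mathfrak{a}S/\mathfrak{a}S\cong (R/\mathfrak{a})\otimes_R S$ and faithful flatness). Hence $r\in\mathfrak{a}$, so $\mathfrak{a}$ is tightly closed and $R$ is $F$-rational. The main obstacle — the only step with real content — is the first one: verifying that $\mathfrak{a}S$ is genuinely a parameter ideal of $S$, i.e. that parameters ascend along a faithfully flat map. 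This needs the dimension formula for fibers of flat maps together with the fact that the relevant fibers are equidimensional; everything else is routine manipulation with faithful flatness.
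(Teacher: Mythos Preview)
Your proposal is correct and follows essentially the same route as the paper's proof: parameters ascend along a faithfully flat map, $I^*S\subseteq (IS)^*=IS$ by $F$-rationality of $S$, and then $I^*=I^*S\cap R\subseteq IS\cap R=I$ by faithful flatness. The paper's argument is just the terse version of yours; your remarks about Cohen--Macaulay or unmixed fibers are unnecessary (the dimension formula $\dim S_{\mathfrak q}=\dim R_{\mathfrak p}+\dim(S_{\mathfrak q}/\mathfrak p S_{\mathfrak q})$ for flat local maps alone suffices for parameter ascent), but the rest is exactly right.
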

\begin{proof}
 Since $R\to S$ is faithfully flat, then parameters of $R$ go to parameters of $S$. Let $I\subset R$ be a parameter ideal.
 Then $(IS)^*=IS$, as $S$ is $F$-rational. Now $I^*S\subseteq (IS)^*=IS$, hence $I^*=I^*S\cap R\subseteq IS\cap R=I$ (the equalities follow because $R\to S$ is faithfully flat). So $I^*=I$.
\end{proof}

\begin{theorem}
\label{F-rat'l-homogeneous}
If $R=\oplus_{i\in\ZZ}R_i$ is a $\ZZ$-graded ring having a unique maximal homogeneous ideal $\mm\subset R$, and $(R_0,\mm_0)$ is a complete local ring, then $R$ is $F$-rational if and only if $R_{\mm}$ is $F$-rational.
\end{theorem}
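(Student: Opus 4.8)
The plan is to treat $F$-rationality as a property of the family of local rings $R_\pp$, $\pp\in\Spec R$, and to exploit that $R$ has a unique maximal homogeneous ideal $\mm$. First note that $R$, being a finitely generated algebra over the complete local ring $R_0$, is excellent and a homomorphic image of a regular ring (use Cohen's structure theorem for $R_0$), so the tight-closure and test-element machinery behaves well on $R$ and on all its localizations. Recall that for such a ring one has: $R$ is $F$-rational if and only if $R_\pp$ is $F$-rational for every $\pp\in\Spec R$. The implication ``$\Leftarrow$'' is elementary — if $I$ is a parameter ideal and $u\in I^*$, then for every maximal ideal $\pp$ the easy inclusion $I^*R_\pp\subseteq(IR_\pp)^*$ together with $(IR_\pp)^*=IR_\pp$ (valid because $IR_\pp$ is a parameter ideal of the $F$-rational ring $R_\pp$) forces $u\in IR_\pp$, hence $u\in I$ — while ``$\Rightarrow$'' is the openness of the $F$-rational locus for excellent rings (a theorem of V\'elez). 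Granting this, ``$R$ $F$-rational $\Rightarrow R_\mm$ $F$-rational'' is immediate, since $R_\mm$ is a localization of $R$.

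For the converse, suppose $R_\mm$ is $F$-rational. I would first reduce to the Cohen--Macaulay case: $R_\mm$ is $F$-rational, hence Cohen--Macaulay, and a graded ring with a unique maximal homogeneous ideal is Cohen--Macaulay if and only if its localization at that ideal is (cf.\ \cite{BH93}); thus $R$, and therefore every $R_\pp$, is Cohen--Macaulay.

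Now consider the non-$F$-rational locus $Z=\{\pp\in\Spec R:R_\pp\text{ is not }F\text{-rational}\}$, which is closed, say $Z=V(\mathfrak a)$ with $\mathfrak a$ radical. The crucial point is that $\mathfrak a$ is \emph{homogeneous}: it is the radical of the parameter test ideal $\tau_{\mathrm{par}}(R)$, which is homogeneous because $R$ is graded — in the Cohen--Macaulay case $\tau_{\mathrm{par}}(R)$ is the annihilator of the tight closure of $0$ inside the graded module $H^{\dim R}_{\mm}(R)$, hence a homogeneous ideal; equivalently, $F$-rationality is stable under the $\mathbb{G}_m$-action coming from the grading. Since $R_\mm$ is $F$-rational we have $\mm\notin Z$, i.e.\ $\mathfrak a\not\subseteq\mm$; but $\mm$ is the \emph{unique} maximal homogeneous ideal of $R$, so a homogeneous ideal not contained in $\mm$ must equal $R$. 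Hence $\mathfrak a=R$, $Z=\emptyset$, and $R$ is $F$-rational by the equivalence recalled above.

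The step I expect to be the main obstacle is the homogeneity of $\tau_{\mathrm{par}}(R)$ (equivalently, of the ideal defining the non-$F$-rational locus). It is ``expected'', but a rigorous proof must handle the tight-closure/test-element theory over the possibly imperfect field $R_0/\mm_0$: one passes to a perfect closure, or to a suitable faithfully flat base change, checks that $F$-rationality, Cohen--Macaulayness and the relevant local cohomology are preserved, and only then invokes the compatibility of the Frobenius with the $\mathbb{G}_m$-action. A more computational alternative for the converse, bypassing test ideals, is to prove directly that for every prime $\pp$ with homogeneous core $\pp^*$, if $R_{\pp^*}$ is $F$-rational then so is $R_\pp$: when $\pp$ is not homogeneous one has $\height\pp=\height\pp^*+1$, $R_\pp/\pp^*R_\pp$ is a discrete valuation ring, and $R_\pp$ is obtained from $R_{\pp^*}$ by a flat extension with geometrically regular closed fibre (graded structure theory, cf.\ \cite{BH93}); combined with the ascent of $F$-rationality along flat local homomorphisms with geometrically regular fibres, with ``$\Leftarrow$'' of the equivalence, and with the fact that every homogeneous prime is contained in $\mm$, this again yields that $R$ is $F$-rational as soon as $R_\mm$ is.
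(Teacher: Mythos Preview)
Your overall architecture matches the paper's: use openness of the $F$-rational locus (V\'elez) and show that the closed complement $Z=V(\mathfrak a)$ is defined by a \emph{homogeneous} ideal, whence $\mathfrak a\not\subseteq\mm$ forces $\mathfrak a=R$. The difference is in how homogeneity of $\mathfrak a$ is obtained. The paper does not invoke the parameter test ideal at all; it runs the $\mathbb G_m$-action argument you mention only as ``equivalently'' in its most naive form: for each unit $\lambda\in R_0\setminus\mm_0$ the map $\phi_\lambda(f)=\lambda^d f$ (for $f\in R_d$) is a ring automorphism, hence permutes the $F$-rational locus, hence $\mathfrak a$ is stable under all $\phi_\lambda$, and an ideal stable under this action is homogeneous \emph{provided $R_0\setminus\mm_0$ is infinite} (Vandermonde). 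This forces a case split the paper makes explicit: when $R_0$ is finite it is a perfect field, one base-changes to the algebraic closure $L$, checks $R'_{\mathfrak n}$ is $F$-rational via smooth ascent (V\'elez again), applies the infinite case to $R'=R\otimes_{R_0}L$, and descends along the faithfully flat map $R\to R'$.

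Your route via $\tau_{\mathrm{par}}(R)=\mathrm{Ann}_R\bigl(0^*_{H^{\dim R}_{\mm}(R)}\bigr)$ is a genuine alternative and, once the gradedness of $0^*$ in $H^{\dim R}_{\mm}(R)$ and the identification $Z=V(\tau_{\mathrm{par}}(R))$ are justified, it has the advantage of avoiding that case split entirely: homogeneity then comes from the grading on local cohomology rather than from having enough units. The price is exactly the extra machinery you flag (test elements, graded tight closure, commutation of $\tau_{\mathrm{par}}$ with localization), whereas the paper's argument is elementary modulo the single citation to V\'elez. Your second alternative---comparing $R_{\pp}$ to $R_{\pp^*}$ via a flat local map with regular fibre and ascending $F$-rationality---is also sound and does not appear in the paper; it trades the openness theorem for an ascent theorem and the graded structure theory of \cite{BH93}.
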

\begin{proof}

Notice that under the assumptions $R$ is a homomorphic image of a Cohen-Macaulay ring, so the ``only if'' direction follows from \cite[Theorem 4.2(f)]{Hochster-Huneke}. For the other direction we start by noting that $R$ is a domain since $R_{\mm}$ is a domain. 

First let us assume that $R_0$ is infinite. Then also the multiplicative group $R_0\setminus \mm_0$ is infinite. Since $R$ is reduced and finitely generated over the excellent local ring $R_0$, \cite[Theorem 3.5]{Velez} says that
\[
U=\{\pp\in \Spec R: R_{\pp}\textrm{ is $F$-rational}\}
\] 
is an open subset of $\Spec R$. Let $I\subset R$ be the radical ideal such that $V(I)=\{\pp\in \Spec R:\pp\supset I\}$ is the complement $\Spec R\setminus U$. Since $\mm\in U$, we are done once we show that $I$ is homogeneous. Consider the action $(R_0\setminus \mm_0)\times R\to R$ defined by $\lambda\cdot f=\lambda^df$ whenever $f\in R_d$, extended by additivity. Because $R_0\setminus \mm_0$ is infinite, it can be easily checked that $I$ is homogeneous if and only if it is stable under this action. Let us see that this is indeed true: of course $x\in I$ if and only if $R_x$ is $F$-rational. Because $\phi_{\lambda}:R\to R$ sending $f$ to $\lambda\cdot f$ is an automorphism of $R$ for all $\lambda\in R_0\setminus \mm_0$, $R_x$ is $F$-rational if and only if $R_{\lambda\cdot x}$ is $F$-rational for all $\lambda\in R_0\setminus \mm_0$. Hence, if $x\in I$, then $\lambda\cdot x\in I$ for any $\lambda\in R_0\setminus \mm_0$, and this concludes the proof in the case in which $R_0\setminus \mm_0$ is infinite. 

If $R_0$ is finite, then being a domain, it must be a perfect field. Hence $R_0\hookrightarrow L$ is a separable extension where $L$ is an algebraic closure of $R_0$. Consider $R'=R\otimes_{R_0} L$. Then $R_{\mm}\to R'_{\mathfrak{n}}$, where $\mathfrak{n}$ is the only maximal homogeneous ideal of $R'$, is a faithfully flat
smooth extension, so by \cite[Theorem 3.1]{Velez}, $R'_{\mathfrak{n}}$ is $F$-rational. Now, since $L=R'_0$ is infinite, by what has been previously said, $R'$ is $F$-rational, and therefore $R$ is $F$-rational by Lemma~\ref{F-rational-ff-map}.

\end{proof}

\begin{proposition}\label{p:F-rat'l}
Being $F$-rational is a $G$-deforming property.
\end{proposition}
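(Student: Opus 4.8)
The plan is to verify the two conditions in the definition of a $G$-deforming property for $\Pp = $ ``$F$-rational''. For condition (1), suppose $R=\bigoplus_{i\in\NN}R_i$ is $\NN$-graded with $R_0=K$, and $x\in R$ is a homogeneous nonzero-divisor of degree $1$ with $R/xR$ $F$-rational; we must show $R_x$ is $F$-rational. The idea is to pass to the localization $R_{\mm}$ at the maximal homogeneous ideal, where the local deformation theorem for $F$-rationality \cite[Theorem 4.2(h)]{Hochster-Huneke} applies: since $x$ is a nonzero-divisor and $(R/xR)_{\mm/xR}$ is $F$-rational (here one should use Theorem \ref{F-rat'l-homogeneous} applied to the graded ring $R/xR$, possibly after completing $R_0=K$, to pass between the graded and local notions), we get that $R_{\mm}$ is $F$-rational. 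Then Theorem \ref{F-rat'l-homogeneous} in the other direction tells us $R$ itself is $F$-rational, and since $F$-rationality passes to localizations (being defined via parameter ideals and tight closure, both of which localize), $R_x$ is $F$-rational.

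For condition (2), suppose $R$ is a ring such that $R[X,X^{-1}]$ is $F$-rational; we must show $R$ is $F$-rational. Here the key observation is that $R\to R[X,X^{-1}]$ is a faithfully flat map, so Lemma \ref{F-rational-ff-map} applies directly and gives that $R$ is $F$-rational. This half is immediate once Lemma \ref{F-rational-ff-map} is in hand.

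The main obstacle I expect is in condition (1): carefully navigating between the graded and local formulations of $F$-rationality so that Theorem \ref{F-rat'l-homogeneous} can be invoked, since that theorem requires $R_0$ to be a complete local ring while here $R_0=K$ is just a field. One must either observe that a field is already complete local (so Theorem \ref{F-rat'l-homogeneous} applies verbatim with $R_0=K$), or replace $K$ by a suitable completion/field extension and check that $F$-rationality descends along the resulting faithfully flat map via Lemma \ref{F-rational-ff-map}. One also needs that $R/xR$ inherits the hypotheses of Theorem \ref{F-rat'l-homogeneous} (it is $\NN$-graded with degree-$0$ part $K$), which is clear, and that applying the local deformation statement is legitimate, i.e.\ that $R_{\mm}$ is a quotient of a Cohen-Macaulay (even regular) local ring and $x$ remains a nonzero-divisor after localizing — both routine since localization is flat and $\mm$ contains $x$. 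Once these bookkeeping points are settled, the argument is a short chain of citations.
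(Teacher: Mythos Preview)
Your proposal is correct and follows essentially the same route as the paper: for condition~(1) you localize at the homogeneous maximal ideal $\mm$, invoke the local deformation result \cite[Theorem 4.2(h)]{Hochster-Huneke} to get $R_{\mm}$ $F$-rational, use Theorem~\ref{F-rat'l-homogeneous} to conclude $R$ is $F$-rational, and then pass to $R_x$; for condition~(2) you apply Lemma~\ref{F-rational-ff-map} to the faithfully flat extension $R\hookrightarrow R[X,X^{-1}]$. Your worry about the hypothesis of Theorem~\ref{F-rat'l-homogeneous} is harmless, since a field $K$ is already a complete local ring.

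One small correction: your parenthetical justification that $F$-rationality localizes because ``parameter ideals and tight closure both localize'' is not a valid argument---commutation of tight closure with localization was long open and is now known to fail in general. The reason $F$-rationality localizes here is that $R$, being $\NN$-graded with $R_0=K$, is a homomorphic image of a regular ring, and under that hypothesis $F$-rationality does pass to localizations; the paper cites \cite[Proposition 10.3.10]{BH93} for this step.
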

\begin{proof}
Let $R$ be a $\NN$-graded ring with $R_0=K$, and suppose that $x\in R$ is a nonzero-divisor on $R$ of degree 1 such that $R/xR$ is $F$-rational. If $\mathfrak{m}=\bigoplus_{i\geq 1}R_i$, then $R_{\mathfrak{m}}/xR_{\mathfrak{m}}$ is $F$-rational. So $R_{\mathfrak{m}}$ is $F$-rational by \cite[Theorem 4.2(h)]{Hochster-Huneke} and hence $R$ is $F$-rational by Theorem~\ref{F-rat'l-homogeneous}. By \cite[Proposition 10.3.10]{BH93}, $R_x$ is $F$-rational. This proves condition (1) of the $G$-deforming definition. Condition (2) of the $G$-deforming definition follows by Lemma \ref{F-rational-ff-map}. 
\end{proof}

\begin{cor}
\label{F-rat'l}
Let $S=K[X_1,\ldots ,X_n]$ be a polynomial ring over a field $K$ of positive characteristic and $w\in(\NN_{>0})^n$. 
If $I\subset S$ is an ideal such that $S/\In_w(I)$ is $F$-rational, then $S/I$ is $F$-rational.
\end{cor}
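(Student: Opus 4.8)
The plan is to deduce Corollary~\ref{F-rat'l} directly from Proposition~\ref{p:G-def} together with Proposition~\ref{p:F-rat'l}. The point is that the Gr\"obner deformation $R=S[t]/\hom_w(I)$ is, by the discussion preceding the first lemma, a $\NN$-graded ring with $R_0=K$ on which $t$ is a nonzero-divisor of degree $1$, with $R/tR\cong S/\In_w(I)$ and $R/(t-1)R\cong S/I$; so $R$ is a Gr\"obner deformation in the sense fixed after that lemma.

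First I would observe that $F$-rationality is a $G$-deforming property: this is exactly the content of Proposition~\ref{p:F-rat'l}. Then I would simply invoke Proposition~\ref{p:G-def} with $\Pp = $ ``being $F$-rational'': since $S/\In_w(I)$ is $F$-rational, the hypothesis of that proposition (with $R$ the Gr\"obner deformation attached to $I$ and $w$) is satisfied, and the conclusion gives that $R/(t-\lambda)R$ is $F$-rational for every $\lambda\in K$; taking $\lambda=1$ yields that $S/I\cong R/(t-1)R$ is $F$-rational. In fact the second paragraph of the statement of Proposition~\ref{p:G-def} is phrased precisely in the language of $I$, $S$ and $w$, so the corollary is literally an instance of it once we know $F$-rationality is $G$-deforming.

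Since everything needed has already been established, there is essentially no obstacle here; the only thing worth noting explicitly is that the positive-characteristic hypothesis on $K$ is needed merely so that $F$-rationality is defined (it is a property only of rings of prime characteristic), and it is compatible with both clauses of the $G$-deforming definition as verified in the proof of Proposition~\ref{p:F-rat'l}. So the proof is a one-line citation:

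\begin{proof}
By Proposition~\ref{p:F-rat'l}, being $F$-rational is a $G$-deforming property. The conclusion therefore follows immediately from Proposition~\ref{p:G-def} applied to this property.
\end{proof}
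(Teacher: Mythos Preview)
Your proof is correct and matches the paper's own argument exactly: the paper simply cites Propositions~\ref{p:G-def} and~\ref{p:F-rat'l}, which is precisely what you do.
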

\begin{proof}
This follows from Propositions \ref{p:G-def} and \ref{p:F-rat'l}
\end{proof}

If in the above corollary we replace the word ``$F$-rational'' with ``$F$-regular'' (that is, in all the localizations of $R$, every ideal is tightly closed), the statement is false.

\begin{example}\label{ex:anurag}
Let $S=K[X_1,\ldots, X_5]$ where $K$ has characteristic $p>2$, and $I$ the ideal generated by the $2$-minors of the matrix:
\[\begin{pmatrix}
X_4^2+X_5^3 & X_3 & X_2 \\
X_1 & X_4^2 & X_3^4-X_2
\end{pmatrix}.\]
Note that, if $\deg(X_4)=3$, $\deg(X_1)=\deg(X_3)=6$, $\deg(X_2)=24$ and $\deg(X_5)=2$, the ideal $I$ is homogeneous. By \cite[Proposition 4.5]{Singh-F-regularity} $S/I$ is not $F$-regular. However, considering the weight vector $w=(6,24,6,3,1)$ of $(X_1,X_2,X_3,X_4,X_5)$, one has that $\init_w(I)$ is the ideal of 2-minors of the matrix:
\[\begin{pmatrix}
X_4^2 & X_3 & X_2 \\
X_1 & X_4^2 & X_3^4-X_2
\end{pmatrix}.\]
By \cite[Proposition 4.3]{Singh-F-regularity} $S/\init_w(I)$ is $F$-regular, so ``$F$-regularity'' is not a $G$-deforming property.
\end{example}

Next we want to prove that being ``$F$-full'' or ``strongly $F$-injective'' are $G$-deforming properties.

\begin{lemma}
\label{ff-F-full}
Let $f: R \to S$ be a faithfully flat map between homomorphic images of regular rings of prime characteristic. If $S$ is $F$-full, so is $R$.
\end{lemma}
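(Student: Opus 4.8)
The plan is to proceed as in Lemma \ref{F-rational-ff-map}, replacing tight closure of parameter ideals by the description of $F$-fullness in terms of the natural Frobenius actions on local cohomology, and then to exploit faithfully flat base change. First I would reduce to the local, and then the complete local, case: $F$-fullness is tested at maximal ideals, and for a maximal ideal $\mm\subset R$ both $H^i_\mm(R)$ and its natural Frobenius action are preserved upon localizing at $\mm$ and completing; choosing a maximal ideal $\mathfrak n\subset S$ lying over $\mm$, the induced map $\widehat{R_\mm}\to\widehat{S_\mathfrak n}$ is again faithfully flat and local, between complete local rings (hence between homomorphic images of regular rings), and $\widehat{S_\mathfrak n}$ is $F$-full. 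If necessary I would also arrange the residue field to be $F$-finite (or perfect) by an auxiliary faithfully flat base change, since $F$-fullness ascends and descends along such extensions. So it suffices to treat a faithfully flat local map of complete local rings.

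Next, fixing such a map, I would use the key point from flat base change: $H^i_{\mm S}(S)\cong H^i_\mm(R)\otimes_R S$, compatibly with the Frobenius actions coming from $R\to S$. Writing $C_i$ for the cokernel of the inclusion $R\cdot\mathcal F(H^i_\mm(R))\hookrightarrow H^i_\mm(R)$ — so that $R$ is $F$-full at $\mm$ exactly when $C_i=0$ for all $i$ — one checks, since $-\otimes_R S$ is exact and carries $R\cdot\mathcal F(H^i_\mm(R))$ onto $S\cdot\mathcal F(H^i_{\mm S}(S))$, that $C_i\otimes_R S\cong H^i_{\mm S}(S)/\bigl(S\cdot\mathcal F(H^i_{\mm S}(S))\bigr)$. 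As $S$ is faithfully flat over $R$, it is then enough to prove that the Frobenius action on $H^i_{\mm S}(S)$ is full for every $i$; equivalently (dualizing via Grothendieck--Matlis local duality, writing $\widehat{R_\mm}=A/I$ with $A$ a complete regular local $F$-finite ring) that the natural trace maps $F_*\Ext^j_A(A/I,A)\to\Ext^j_A(A/I,A)$ are surjective, cf.\ \cite{Ma-Qui}.

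The remaining step — which I expect to be the main obstacle — is to deduce this from the $F$-fullness of $S$, since the hypothesis on $S$ concerns the local cohomology of $S$ at its maximal ideals while $\mm S$ is typically not $\mathfrak n$-primary. Here one must use that $\mm S$ is extended from a maximal ideal along a flat map, so that the associated primes and the fibres of $R\to S$ are controlled; this is what rules out phenomena such as the Frobenius being non-full on $H^0_{(x)}(k[[x,y]]/(xy))$. Concretely, I would argue by induction on $\dim R$: the case $\dim R=0$ is immediate, since then $\mm S$ is nilpotent and $H^i_{\mm S}(S)$ is $S$ for $i=0$ and $0$ otherwise, on which the Frobenius is trivially full; for $\dim R>0$ I would use the exact sequence $0\to H^0_\mm(R)\to R\to R/H^0_\mm(R)\to 0$ to isolate the $\mm$-torsion part — observing that fullness of the Frobenius passes to the middle term of a Frobenius-equivariant short exact sequence whose outer terms are full — together with the spectral sequence $H^j_\mathfrak n\bigl(H^i_{\mm S}(S)\bigr)\Rightarrow H^{i+j}_\mathfrak n(S)$ to feed in the $F$-fullness of $S$ at $\mathfrak n$. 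Transferring fullness from the abutment to the relevant $E_2$-terms of this spectral sequence is the delicate part; an alternative is to choose a regular presentation of $\widehat{S_\mathfrak n}$ dominating that of $\widehat{R_\mm}$, thereby realizing the trace maps for $\widehat{S_\mathfrak n}$ as a faithfully flat base change of those for $\widehat{R_\mm}$, and then conclude by faithfully flat descent of surjectivity — with the construction of the compatible presentations now being where the work sits.
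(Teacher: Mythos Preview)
The paper's proof is a two-line citation: after reducing to a flat local map $R_\mm\to S_\mathfrak{n}$ (as you also do), it invokes \cite[Proposition~3.9 and Corollary~2.2]{DDM} to conclude that $F$-fullness descends along such maps. The content sits entirely in that reference.

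Your proposal attempts this descent from scratch, and the gap is exactly where you locate it. The reduction and the identity $C_i\otimes_{R_\mm} S_\mathfrak{n}\cong H^i_{\mm S_\mathfrak{n}}(S_\mathfrak{n})/\bigl(S_\mathfrak{n}\cdot\mathcal F(H^i_{\mm S_\mathfrak{n}}(S_\mathfrak{n}))\bigr)$ are correct, so by faithful flatness you are indeed reduced to showing that the Frobenius on $H^i_{\mm S_\mathfrak{n}}(S_\mathfrak{n})$ is full; but the hypothesis only gives this for $H^i_{\mathfrak{n}}(S_\mathfrak{n})$, and $\mm S_\mathfrak{n}$ is typically not $\mathfrak{n}$-primary. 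Neither sketch you offer closes this as written. Fullness on the abutment of $H^j_\mathfrak{n}(H^i_{\mm S}(S))\Rightarrow H^{i+j}_\mathfrak{n}(S)$ does not formally propagate back to the $E_2$-terms without control of the differentials that you do not have. The compatible-presentations idea is closer --- a Cohen factorization of $\widehat{R_\mm}\to\widehat{S_\mathfrak{n}}$ does yield a flat map $A\to B$ of complete regular local rings with $\widehat{R_\mm}=A/I$ and $\widehat{S_\mathfrak{n}}=B/J$ --- but Frobenius pushforward does not commute with $-\otimes_A B$ in general, so the trace maps for $B/J$ are not literally the base change of those for $A/I$, and further argument is needed. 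What makes \cite{DDM} work is the reformulation of $F$-full as \emph{cohomologically full} (their Corollary~2.2): surjectivity of $H^i_{\mm_A}(A/J')\to H^i_{\mm_A}(A/I)$ for every $J'\subseteq I$ with $\sqrt{J'}=\sqrt{I}$. This is a condition on ordinary module maps, with no Frobenius twist, and its descent along a faithfully flat local map (their Proposition~3.9) is then direct.
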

\begin{proof}
 First note that since $f$ is faithfully flat the natural map $\Spec S\to \Spec R$ induced by $f$ is surjective.  We will show $R_{\mathfrak{m}}$ is $F$-full for every maximal ideal $\mathfrak{m}$ of $R$, that is equivalent to say that $R$ is $F$-full since $H_{\mathfrak{m}R_{\mathfrak{m}}}^i(R_{\mathfrak{m}})\cong H_{\mathfrak{m}}^i(R)$.
 
 Let $\mathfrak{m}$ be a maximal ideal of $R$. Let $\mathfrak{n}$ be a maximal ideal in $S$ containing $\mathfrak{m}S$.
 Then $R_{\mathfrak{m}} \to S_{\mathfrak{n}}$ is a flat local map. By hypothesis $S_{\mathfrak{n}}$ is $F$-full.
By \cite[Proposition 3.9, Corollary 2.2]{DDM}, $R_{\mathfrak{m}}$ is $F$-full.
\end{proof}

\begin{proposition}
\label{p:F-full}
 Let $R=S/I$ with $S$ is an $n$-dimensional regular ring of prime characteristic. Then $R$ is $F$-full iff the natural map $\Ext^i_{S}(R,S)\to H^i_{I}(S)$ is  injective for every $ i=0,\ldots ,n$. 
 
 In particular, if $R$ is a homomorphic image of a regular ring:
 \begin{itemize}
 \item the $F$-full locus $\{\mathfrak{p}\in \Spec R: R_{\mathfrak{p}}\textrm{ is F-full}\}$ is a Zariski open subset of $\Spec R$.
 \item If $R=\oplus_{i\in\ZZ}R_i$ is a $\ZZ$-graded ring having a unique maximal homogeneous ideal $\mm\subset R$, then $R$ is $F$-full if and only if $R_{\mm}$ is $F$-full.
 \end{itemize}
\end{proposition}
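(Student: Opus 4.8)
The plan is to establish the Ext-criterion first and then read off the two ``In particular'' statements from it. The key input is the local duality / Frobenius comparison picture: if $S$ is an $n$-dimensional regular ring of characteristic $p$ and $R=S/I$, then for each $i$ there is a natural $R$-module map $\Ext^i_S(R,S)\to H^i_I(S)$, and the Frobenius on $H^i_I(S)$ (coming, as in the excerpt, from $F\colon H^i_I(S)\to H^i_I(F_*S)=H^i_{I^{[p]}}(S)\cong H^i_I(S)$) is compatible with the analogous Frobenius on $\Ext$. I would first localize at a maximal ideal $\mm$ of $R$ (equivalently, of $S$), so that after completing we may invoke Matlis duality: $H^i_{I}(S)_\mm$ is, up to Matlis dual, related to a local cohomology module of $R_\mm$, and the image of $\Ext^i_S(R,S)_\mm$ inside it is exactly the ``finitely generated part''. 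The point is that the cokernel of $\Ext^i_S(R,S)_\mm\to H^i_I(S)_\mm$ is, Matlis-dually, the quotient of $H^{n-i}_{\mm}(R_\mm)$ by the image of its Frobenius; so this cokernel vanishes for all $i$ precisely when every $H^j_\mm(R_\mm)$ is generated by the image of Frobenius, i.e. precisely when $R_\mm$ is $F$-full. Translating ``injective'' versus ``the cokernel of the previous stage'' requires a small diagram chase, but since the $\Ext$ terms are finitely generated and the whole comparison is a finite complex, injectivity of $\Ext^i_S(R,S)\to H^i_I(S)$ for all $i$ is equivalent to surjectivity onto the image, hence to $F$-fullness; this is the heart of the argument and I expect the bookkeeping of which map is injective versus which cokernel vanishes to be the main obstacle, essentially a careful invocation of the relevant result of \cite{DDM} or a direct local-duality computation.

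Granting the Ext-criterion, the openness of the $F$-full locus follows formally. Write $R=S/I$ with $S$ regular; the maps $\Ext^i_S(R,S)\to H^i_I(S)$ are maps of finitely generated-versus-not $R$-modules, but their \emph{kernels} $N_i:=\ker(\Ext^i_S(R,S)\to H^i_I(S))$ are finitely generated $R$-modules (submodules of finitely generated modules over the Noetherian ring $R$). For a prime $\pp$, the criterion applied to $R_\pp=S_\pp/I S_\pp$ says $R_\pp$ is $F$-full iff $(N_i)_\pp=0$ for all $i$, because localization commutes with $\Ext$ and with local cohomology $H^i_{I}(S)$. Hence the non-$F$-full locus is $\bigcup_{i=0}^n \Supp(N_i) = V\!\left(\prod_i \mathrm{Ann}(N_i)\right)$, a closed subset, so the $F$-full locus is Zariski open.

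For the graded statement, let $R=\bigoplus_{i\in\ZZ}R_i$ be $\ZZ$-graded with a unique maximal homogeneous ideal $\mm$. As in the proof of Theorem \ref{F-rat'l-homogeneous}, the ``only if'' direction is immediate since $H^i_{\mm R_\mm}(R_\mm)\cong H^i_\mm(R)$ identifies the local cohomology, and $F$-fullness of $R$ is by definition tested at maximal ideals. For the converse, I would again use the action of $R_0\setminus\mm_0$ (when $R_0$ is infinite) to show that each $\mathrm{Ann}(N_i)$ is a homogeneous ideal: the automorphisms $\phi_\lambda$ of $R$ fix $I$ and are compatible with the comparison maps, so they stabilize each $N_i$, forcing $\mathrm{Ann}(N_i)$ to be stable under the action and hence homogeneous. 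Since $R_\mm$ is $F$-full, $\mm\notin\Supp(N_i)$, i.e. $\mathrm{Ann}(N_i)\not\subset\mm$; but a homogeneous ideal not contained in the unique maximal homogeneous ideal $\mm$ must be the unit ideal, so $N_i=0$ for all $i$ and $R$ is $F$-full by the criterion. The case $R_0$ finite is handled exactly as in Theorem \ref{F-rat'l-homogeneous}, by base changing to the algebraic closure $L$ of the perfect field $R_0$: the extension $R_\mm\to R'_{\mathfrak n}$ is faithfully flat between homomorphic images of regular rings, so Lemma \ref{ff-F-full} transfers $F$-fullness down from $R'$ (which is graded with $R'_0=L$ infinite) to $R$. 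The only mildly delicate point here is checking that the $\phi_\lambda$ really commute with the natural maps $\Ext^i_S(R,S)\to H^i_I(S)$ after choosing a graded presentation of $S$, which is routine once one fixes the grading on $S$ making $S\to R$ homogeneous.
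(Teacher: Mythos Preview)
Your treatment of the openness of the $F$-full locus matches the paper exactly: set $N_i=\ker(\Ext^i_S(R,S)\to H^i_I(S))$, note these are finitely generated $R$-modules, and identify the non-$F$-full locus with $\bigcup_i\Supp N_i$.

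For the Ext-criterion itself, your direct Matlis-duality sketch is garbled. The assertion that the \emph{cokernel} of $\Ext^i_S(R,S)_\mm\to H^i_I(S)_\mm$ is Matlis-dual to $H^{n-i}_\mm(R_\mm)/\mathrm{im}\,F$ is not right (that cokernel is not a finite-length module), and ``injectivity is equivalent to surjectivity onto the image'' is a tautology. The correct picture is that $H^i_I(S)=\varinjlim_e\Ext^i_S(S/I^{[p^e]},S)$, and local duality matches injectivity of the transition map $\Ext^i_S(R,S)\to\Ext^i_S(S/I^{[p]},S)$ with surjectivity of $F_*H^{n-i}_\mm(R)\to H^{n-i}_\mm(R)$; injectivity into the colimit then follows formally. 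The paper does not write this out: it simply invokes \cite[Proposition~2.1, Corollary~2.2]{DDM} at each maximal ideal and observes that the local injectivity statements are the localizations of the global one. You list exactly this citation as an alternative, so you can recover the paper's proof.

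For the graded statement your argument via the $\phi_\lambda$-action and the finite/infinite $R_0$ dichotomy would work, but it is far more elaborate than what is needed, and in fact your own setup already contains the shortcut. Once you fix a graded presentation $S\twoheadrightarrow R$ (which you must do anyway to make the $\phi_\lambda$ compatible with the comparison map), the ideal $I$ is homogeneous, the modules $\Ext^i_S(R,S)$ and $H^i_I(S)$ carry natural gradings, the comparison map is graded, and hence each $N_i$ is a graded $R$-module. For a finitely generated graded module over a $*$-local ring, $(N_i)_\mm=0$ forces $N_i=0$ (its associated primes are homogeneous, hence contained in $\mm$). This is the paper's entire argument for the graded case; no torus action and no passage to an infinite field are required. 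Note also that your ``only if'' direction is not quite as immediate as you say, since $\mm$ need not be a maximal ideal of $R$; but it follows at once from the criterion (or from openness), since $N_i=0$ implies $(N_i)_\mm=0$.
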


\begin{proof}
By definition $R$ is $F$-full if and only if $R_{\mm}$ is $F$-full for all maximal ideals $\mm\subset R$, or equivalently, if and only if $R_{\mm}$ is $F$-full for all maximal ideals $M\subset S$ containing $I$ and $\mm=M/I$. On the other hand, by \cite[Proposition 2.1, Corollary 2.2]{DDM}, $R_{\mm}$ is $F$-full if and only if the natural map
\[\Ext^i_{S}(R,S)_M\cong \Ext^i_{S_M}(R_{\mm},S_M)\to H^i_{IS_M}(S_M)\cong H^i_{I}(S)_M\] 
is  injective for every $i=0,\ldots ,n$. Clearly, the above maps are injective for all maximal ideals $M\subset S$ containing $I$ and for all $i=0,\ldots ,n$ if and only if the maps $\Ext^i_{S}(R,S)\to H^i_{I}(S)$ are injective for all $i=0,\ldots ,n$.

For the last part, just call $N_i$ the kernel of $\Ext^i_{S}(R,S)\to H^i_{I}(S)$. Then, for a prime ideal $\pp\in\Spec R$, $R_{\pp}$ is $F$-full if and only if $(N_i)_{\pp}=0$ for all $i=0,\ldots ,n$. Therefore the $F$-full locus of $R$ is $\Spec R\setminus\cup_{i=0}^n\mathrm{Supp} N_i$, that is open. Finally, in the graded case, $N_i$ is a graded $R$-module, so $(N_i)_{\mm}=0$ implies $N_i=0$.
\end{proof}

\begin{proposition}\label{p:strongly F-injective}
Being $F$-full or strongly $F$-injective are $G$-deforming properties.
\end{proposition}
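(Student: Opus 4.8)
The plan is to establish the two assertions in sequence, $F$-fullness first and then strong $F$-injectivity (reusing the $F$-full case), and in each case to follow the strategy of Proposition~\ref{p:F-rat'l}: verify part (1) of the definition of a $G$-deforming property by localizing at the irrelevant maximal ideal, applying the known \emph{local} deformation theorem, and then climbing back up to the graded ring; and verify part (2) by faithfully flat descent along $R\to R[X,X^{-1}]$. Note that any Noetherian $\NN$-graded ring $R$ with $R_0=K$ is a finitely generated $K$-algebra, hence a homomorphic image of a regular ring, so Proposition~\ref{p:F-full} and Lemma~\ref{ff-F-full} apply to it.

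For $F$-fullness, part (1): let $R$ be $\NN$-graded with $R_0=K$, let $x\in R$ be a degree-$1$ nonzero-divisor, and assume $R/xR$ is $F$-full. Writing $\mm=\bigoplus_{i\ge1}R_i$ (a maximal ideal of $R$, with $x\in\mm$), the ideal $\overline{\mm}=\mm/xR$ is maximal in $R/xR$, so by definition of $F$-fullness the local ring $(R/xR)_{\overline{\mm}}\cong R_{\mm}/xR_{\mm}$ is $F$-full; since $x$ is a nonzero-divisor on $R_{\mm}$, the local deformation theorem \cite[Theorem 4.2]{Ma-Qui} gives that $R_{\mm}$ is $F$-full, and then the graded statement of Proposition~\ref{p:F-full} shows that $R$ itself is $F$-full. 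Moreover $R_x$ is $F$-full: by Proposition~\ref{p:F-full}, $R$ being $F$-full forces the modules $N_i=\ker(\Ext^i_S(R,S)\to H^i_I(S))$ to vanish, hence $(N_i)_x=0$ and $R_x$ is $F$-full. Part (2): $R\to R[X,X^{-1}]$ is faithfully flat, and $R[X,X^{-1}]$ is a homomorphic image of a regular ring of prime characteristic whenever $R$ is, so Lemma~\ref{ff-F-full} shows that $R$ is $F$-full as soon as $R[X,X^{-1}]$ is.

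For strong $F$-injectivity I would run the same argument with two adjustments. First, since strong $F$-injectivity means $F$-injectivity together with $F$-fullness and both are imposed at every maximal ideal, $(R/xR)_{\overline{\mm}}\cong R_{\mm}/xR_{\mm}$ is again strongly $F$-injective, and now I invoke \cite[Corollary 5.16]{Ma-Qui} --- the local deformation theorem for strongly $F$-injective rings --- to conclude that $R_{\mm}$ is strongly $F$-injective. Second, in place of the graded statement of Proposition~\ref{p:F-full} I need the analogous transfer ``$R_{\mm}$ strongly $F$-injective $\Rightarrow$ $R$ strongly $F$-injective'' for $\NN$-graded $R$ with $R_0=K$: the $F$-full half is Proposition~\ref{p:F-full}, and for the $F$-injective half I would either repeat the $K^{*}$-action argument in the proof of Theorem~\ref{F-rat'l-homogeneous} --- using that the strongly $F$-injective locus of a homomorphic image of a regular ring is open (see \cite{DDM}, \cite{Ma-Qui}) --- or, equivalently, observe that for each maximal ideal $\mathfrak q\subset R$ the largest homogeneous ideal $\mathfrak q^{*}\subseteq\mathfrak q$ is prime, hence contained in $\mm$, so that $R_{\mathfrak q}$ is a localization of $R_{\mm}$ and strong $F$-injectivity passes to it. Thus $R$ is strongly $F$-injective, and then $R_x$ is as well. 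Part (2) is as before: $R\to R[X,X^{-1}]$ is faithfully flat, and strong $F$-injectivity descends along faithfully flat maps between homomorphic images of regular rings of prime characteristic --- Lemma~\ref{ff-F-full} for the $F$-full component and \cite{DDM} for the $F$-injective component.

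The step I expect to be the main obstacle is exactly the transfer just described: carrying strong $F$-injectivity from the localization $R_{\mm}$ at the irrelevant ideal back to all of $R$. For $F$-fullness this is supplied cleanly by the Ext-characterization of Proposition~\ref{p:F-full}, but no such characterization is recorded here for strong $F$-injectivity, so the argument must lean on the genuinely non-formal localization and openness properties of strongly $F$-injective rings proved in \cite{Ma-Qui} and \cite{DDM} --- properties that the bare ``$F$-injective'' notion is not known to enjoy, which is precisely why the section is organized around the stronger one.
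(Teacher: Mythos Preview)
Your overall architecture matches the paper's proof exactly: localize at the irrelevant ideal $\mm$, apply the local deformation results \cite[Theorem 4.2, Corollary 5.16]{Ma-Qui}, transfer back to $R$, pass to $R_x$, and handle condition (2) by faithfully flat descent. You also correctly flag the transfer ``$R_\mm$ strongly $F$-injective $\Rightarrow$ $R$ strongly $F$-injective'' as the crux. The paper does not reprove this step; it simply invokes \cite[Theorem 5.12]{DaMu} (Datta--Murayama) for it, and likewise cites \cite[Lemma 3.4]{DDM} and \cite[Theorem 3.3, Theorem 3.9]{DaMu} for the passage to $R_x$ and for condition (2). Your Option 1 for the transfer (openness of the strongly $F$-injective locus plus the $K^{*}$-action argument from Theorem~\ref{F-rat'l-homogeneous}) is a valid alternative and would work.

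Your Option 2, however, contains a genuine slip: from $\mathfrak q^{*}\subseteq\mm$ you conclude that $R_{\mathfrak q}$ is a localization of $R_{\mm}$, but this is false --- take $R=K[x]$, $\mm=(x)$, $\mathfrak q=(x-1)$. What is true is that $R_{\mathfrak q^{*}}$ is a localization of $R_{\mm}$; getting from $R_{\mathfrak q^{*}}$ to $R_{\mathfrak q}$ is a separate (Matijevic--Roberts type) step, not a localization, and it is precisely this step that needs the non-formal input from \cite{DaMu}. So either cite that reference, as the paper does, or stick with your Option 1.
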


\begin{proof}
 Let $(R,\mathfrak{m})$ be an $\mathbb{N}$-graded ring with $R_0=K$. Suppose that $R/xR$ is $F$-full (strongly $F$-injective) for some homogeneous element $x$ of degree $1$. Then $R_{\mathfrak{m}}/xR_\mathfrak{m}$ is $F$-full (strongly $F$-injective) by definition. Hence $R_{\mathfrak{m}}$ is $F$-full (strongly $F$-injective) by \cite[Theorem 4.2, Corollary 5.16]{Ma-Qui}, thus $R$ is $F$-full (strongly $F$-injective) by 
Proposition~\ref{p:F-full} and \cite[Theorem 5.12]{DaMu}. By \cite[Lemma 3.4]{DDM} and \cite[Theorem 3.3]{DaMu}, $R_x$ is $F$-full (strongly $F$-injective), so condition $(1)$ of $G$-deforming property definition is satisfied. Now condition (2) follows from Lemma~\ref{ff-F-full} and \cite[Theorem 3.9]{DaMu}.
\end{proof}
Thus similar to Corollary~\ref{F-rat'l}, we have
\begin{cor}
\label{strongly F-injective}

Let $S=K[X_1,\ldots ,X_n]$ be a polynomial ring over a field $K$ of positive characteristic and $w\in(\NN_{>0})^n$. 
If $I\subset S$ is an ideal such that $S/\In_w(I)$ is strongly $F$-injective, then $S/I$ is strongly $F$-injective.
\end{cor}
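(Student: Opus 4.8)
The plan is to invoke the general machinery already assembled in the earlier sections, exactly as in the proof of Corollary~\ref{F-rat'l}. First recall that the $w$-homogenization $\hom_w(I)\subset S[t]$ produces a Gr\"obner deformation $R=S[t]/\hom_w(I)$, which is an $\NN$-graded ring with $R_0=K$, with $t$ a homogeneous nonzero-divisor of degree $1$, such that $R/tR\cong S/\In_w(I)$ and $R/(t-1)R\cong S/I$. Second, by Proposition~\ref{p:strongly F-injective}, being strongly $F$-injective is a $G$-deforming property. Therefore Proposition~\ref{p:G-def}, applied to $R$ with $\Pp$ the property ``strongly $F$-injective'' and using that $R/tR\cong S/\In_w(I)$ has this property by hypothesis, gives that $R/(t-\lambda)R$ is strongly $F$-injective for every $\lambda\in K$. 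Taking $\lambda=1$ yields that $S/I$ is strongly $F$-injective, which is the assertion.

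In other words, the corollary is a purely formal consequence of Propositions~\ref{p:G-def} and \ref{p:strongly F-injective}, with no genuine obstacle remaining at this point. The substance of the argument has already been discharged: Proposition~\ref{p:strongly F-injective} is where the real content lies, combining the local deformation theorems for $F$-fullness and strong $F$-injectivity with the graded-to-local comparison established in Proposition~\ref{p:F-full} (the $F$-full locus is Zariski open, and in the graded case $R$ is $F$-full if and only if $R_{\mm}$ is, since the relevant kernels $N_i$ are graded) together with the localization statements needed to pass from $R$ to $R_t=R_x$ for condition (1) and the faithfully flat descent for condition (2). So the proof of this corollary consists of nothing more than citing those two propositions, in complete analogy with Corollary~\ref{F-rat'l}.
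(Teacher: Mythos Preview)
Your proof is correct and matches the paper's approach exactly: the paper simply notes that this corollary follows ``similar to Corollary~\ref{F-rat'l}'', i.e., as an immediate consequence of Propositions~\ref{p:G-def} and~\ref{p:strongly F-injective}. Your additional commentary unpacking what those propositions rely on is accurate but not needed for the proof itself.
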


\begin{cor}
\label{imp}
Let $K$ be a field of characteristic $p>0$ and $<$ a monomial order on $S=K[X_1,\ldots ,X_n]$, $I\subset S$ an ideal of $S$ and $A\subset S$ a $K$-subalgebra. 
\begin{enumerate}
\item If $\In_<(A)$ is Noetherian and normal, then $A$ is $F$-rational.
\item If $\In_<(I)$ is radical, then $S/I$ is strongly $F$-injective, and so $F$-injective.
\end{enumerate}
\end{cor}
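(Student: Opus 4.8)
The plan is to reduce both statements to results already established in the excerpt: Corollary~\ref{strongly F-injective} and Proposition~\ref{p:F-rat'l} via Proposition~\ref{p:G-def}, together with a Gr\"obner/Sagbi degeneration that produces a weight vector $w$ such that $S/\In_w(I) = S/\In_<(I)$ (respectively $P/\In_w(J)\cong\In_<(A)$). Recall from Section~2 that for a fixed ideal there is always a weight vector $w\in(\NN_{>0})^n$ with $\In_w(I)=\In_<(I)$, and from the discussion of Sagbi bases (via \cite[Corollary 2.1]{CHV}) that for $A=K[f_1,\dots,f_m]$ with presentation $P/J\cong A$ there is $u\in(\NN_{>0})^m$ with $P/\In_u(J)\cong \In_<(A)$. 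So in both cases we are genuinely in the Gr\"obner-deformation setup of Proposition~\ref{p:G-def}, and it suffices to check that the relevant $F$-singularity of the special fibre is a $G$-deforming property.

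For part (2): if $\In_<(I)$ is a radical monomial ideal, then $S/\In_<(I)$ is $F$-split (a Stanley--Reisner ring over a field of positive characteristic is $F$-split via the standard splitting, e.g.\ Proposition~\ref{standardsplitting}), hence it is strongly $F$-injective by the discussion following the definition of strongly $F$-injective (it is $F$-split, therefore $F$-full by \cite[Theorem 3.7]{Ma}, and $F$-split rings are $F$-injective). Applying Corollary~\ref{strongly F-injective} with the weight vector $w$ above gives that $S/I$ is strongly $F$-injective, and in particular $F$-injective. For part (1): if $\In_<(A)$ is Noetherian and normal, then $\In_<(A)\cong P/\In_u(J)$ where $\In_u(J)$ is a (normal, in the appropriate sense) binomial ideal; a Noetherian normal semigroup ring / normal affine domain in positive characteristic that is a localization of a polynomial ring in the Sagbi setting is $F$-regular (Hochster--Roberts / the fact that normal affine semigroup rings are $F$-regular), hence $F$-rational. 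Then $P/\In_u(J)$ being $F$-rational, Proposition~\ref{p:F-rat'l} (via Proposition~\ref{p:G-def}) yields that $P/J\cong A$ is $F$-rational.

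I expect the main obstacle to be \emph{not} the deformation step — that is handed to us by Propositions~\ref{p:G-def}, \ref{p:F-rat'l} and Corollary~\ref{strongly F-injective} — but rather making precise and citing correctly the two structural inputs about the special fibres: (i) that a Stanley--Reisner ring in characteristic $p$ is strongly $F$-injective (which needs the $F$-split $\Rightarrow$ $F$-full implication of \cite{Ma,Ma-Qui} plus $F$-split $\Rightarrow$ $F$-injective, already recalled in the text), and (ii) that a Noetherian normal $K$-subalgebra of $S$ arising as $\In_<(A)$ — equivalently, a normal affine semigroup ring — is $F$-rational in positive characteristic. For (ii) one should be slightly careful that "$\In_<(A)$ normal" is being used in the sense that the associated affine semigroup is saturated, so that the toric ring is Cohen--Macaulay (Hochster's theorem) and $F$-rational (indeed $F$-regular, by Hochster--Roberts-type arguments for normal semigroup rings); once this is in place, Lemma~\ref{F-rational-ff-map} and Proposition~\ref{p:F-rat'l} close the argument. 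A secondary routine point is checking that the degeneration from $A$ to $\In_<(A)$ really fits condition (1) of the $G$-deforming definition, i.e.\ that the $u$-homogenization gives a Gr\"obner deformation $R$ with $R/tR\cong \In_<(A)$ and $R_t\cong A[X,X^{-1}]$ — this is the Sagbi analogue of the Lemma in Section~2 and follows the same dehomogenization argument.
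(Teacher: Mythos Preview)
Your proposal is correct and follows essentially the same route as the paper: for (2) you observe that a radical monomial initial ideal gives a Stanley--Reisner special fibre which is $F$-split and hence strongly $F$-injective, then invoke Corollary~\ref{strongly F-injective}; for (1) you pass to the Sagbi presentation $P/\In_u(J)\cong\In_<(A)$, note that a normal affine semigroup ring is $F$-regular (the paper phrases this as ``direct summand of a polynomial ring'' and cites \cite[Proposition 4.12]{Hochster-Huneke}), hence $F$-rational, and then apply Corollary~\ref{F-rat'l}. The only cosmetic difference is that the paper is terser and pins the $F$-regularity of the normal toric special fibre to a single citation rather than discussing saturated semigroups or Hochster's theorem.
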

\begin{proof}
(1). If $f_1,\ldots ,f_m\in A$ are a Sagbi basis of $A$, that is $\In_<(A)=K[\In_<(f_1),\ldots ,\In_<(f_m)]$, it is easy to see that $A=K[f_1,\ldots ,f_m]$. It turns out that, if $J\subset P=K[Y_1,\ldots ,Y_m]$ is the kernel of the $K$-algebra homomorphism sending $Y_i$ to $f_i$ (so that $P/J\cong A$), there exists $u\in(\NN_{>0})^m$ such that $\In_u(J)$ is the kernel of the $K$-algebra homomorphism sending $Y_i$ to $\In_<(f_i)$ (hence $\In_u(J)$ is a binomial ideal and $P/\In_u(J)\cong \In_<(A)$).
Since $P/\In_u(J)\cong \In_<(A)$ is a normal toric ring, it is $F$-regular, being a direct summand of a polynomial ring (\cite[Proposition 4.12]{Hochster-Huneke}). In particular, $P/\In_u(J)$ is $F$-rational. So,
 by Proposition~\ref{F-rat'l}, $A\cong P/J$ is $F$-rational.

(2). Since $\In_<(I)$ is radical, it is generated by square free monomials, hence $S/\In_<(I)$ is $F$-split, in particular strongly $F$-injective, hence by Corollary~\ref{strongly F-injective}, $S/I$ is strongly $F$-injective.
\end{proof}

\begin{remark}
The conclusion that $A$ is $F$-rational if $\init_<(A)$ is normal was already proved in \cite[Corollary 2.3]{CHV}.
In the proof is used that a $\NN$-graded ring $R$ (with $R_0$ a field of positive characteristic) is $F$-rational whenever $R/xR$ is so for some non-zero divisor $x\in R_1$. It is not clear to us how to show this fact (certainly known in the local case) without using Theorem \ref{F-rat'l-homogeneous}.
 
The conclusion that $S/I$ is $F$-injective provided $\init_<(I)$ is a squarefree monomial ideal has been proved independently in \cite[Theorem 4.3]{Gonz}, where this result has been crucial to prove that the only Gorenstein binomial edge ideals are complete intersections. 
% In the proof it is used that a $\NN$-graded ring $R$ (with $R_0$ a field of positive characteristic) is $F$-injective whenever $R/xR$ is $F$-split for some non-zero divisor $x\in R_1$. It is not clear to us how to show this fact (certainly known in the local case) without using Theorem 5.12 of \cite{DaMu}.
\end{remark}

\begin{example}\label{e2:anurag}
Let $S=K[X_1,\ldots, X_5]$ where $K$ has characteristic $p>3$, and $I$ the ideal generated by the $2$-minors of the matrix of Example \ref{ex:anurag}, namely:
\[\begin{pmatrix}
X_4^2+X_5^3 & X_3 & X_2 \\
X_1 & X_4^2 & X_3^4-X_2
\end{pmatrix}.\]
If $<$ is the lexicographic monomial order with $X_1>X_2>X_3>X_4>X_5$, then $\init_<(I)=(X_1X_3,X_1X_2,X_2X_3)$, so $S/I$ is strongly $F$-injective by Corollary \ref{imp}. However, $S/I$ is not $F$-split by \cite[Proposition 4.5]{Singh-F-regularity}. 
\end{example}

The following corollary can help in recognising certain classes of projective varieties whose defining ideal, in any embedding, cannot admit a squarefree initial ideal.

\begin{cor}
\label{imp2}
 Let $X$ be a projective scheme over a field $K$ of characteristic $0$ such that, for some embedding of $X$ in $\mathbb{P}^n$ and monomial order $<$ on $K[x_0,\cdots,x_n]$, we have that $\In_<(I)$ is squarefree (where $I$ is the defining ideal of the embedding). Then the Frobenius action on $H^i(X_p,\mathcal{O}_{X_p})$ must be injective for all $i>0$ and prime number $p\gg0$ ($X_p$ denotes a reduction mod $p$ of $X$).
\end{cor}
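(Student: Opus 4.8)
The plan is to reduce to positive characteristic, invoke Corollary~\ref{imp}, and then convert $F$-injectivity of the homogeneous coordinate ring into injectivity of the Frobenius action on $H^{\bullet}(X_p,\mathcal O_{X_p})$ via the standard comparison between local and sheaf cohomology. First I would spread out, mimicking the characteristic-$0$-to-characteristic-$p$ part of the proof of Theorem~\ref{t:symb}. Writing $S=K[x_0,\ldots ,x_n]$ and letting $I\subset S$ be the homogeneous defining ideal of the embedding $X\hookrightarrow\mathbb{P}^n_K$, choose a finitely generated $\ZZ$-subalgebra $Z\subset K$ containing the coefficients of the reduced Gr\"obner basis of $I$ with respect to $<$. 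With the notation $S_Z=Z[x_0,\ldots ,x_n]$, $I_Z=I\cap S_Z$, $Q(P)=\operatorname{Frac}(Z/P)$ for a prime $P\subset Z$, $S_{Q(P)}=Q(P)[x_0,\ldots ,x_n]$ and $I_{Q(P)}=I_ZS_{Q(P)}$, one checks exactly as in Theorem~\ref{t:symb} that for every prime number $p$ and every $P\in\Mini(pZ)$ the reduction of the reduced Gr\"obner basis of $I$ is a (reduced) Gr\"obner basis of $I_{Q(P)}$ — here the needed $S$-polynomial reductions do not involve dividing by a coefficient of a leading term, since those coefficients are $1$ — hence $\init_<(I_{Q(P)})=\init_<(I)$, which is squarefree. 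In this setup $Q(P)$ has characteristic $p$ and $X_p:=\Proj\bigl(S_{Q(P)}/I_{Q(P)}\bigr)$ is a reduction mod $p$ of $X$.

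Next, by Corollary~\ref{imp}(2) applied to the characteristic-$p$ ring $R_p:=S_{Q(P)}/I_{Q(P)}$, whose initial ideal $\init_<(I_{Q(P)})$ is squarefree hence radical, the ring $R_p$ is strongly $F$-injective, in particular $F$-injective: the natural Frobenius map $F\colon H^i_{\mathfrak n}(R_p)\to H^i_{\mathfrak n}(R_p)$ is injective for every maximal ideal $\mathfrak n\subset R_p$ and every $i$. I apply this with $\mathfrak n=\mm=\bigoplus_{j\ge 1}(R_p)_j$, which is a maximal ideal because $(R_p)_0=Q(P)$ is a field.

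Finally I would translate. For $i\ge 2$ there is a classical isomorphism of $\ZZ$-graded $R_p$-modules $H^i_{\mm}(R_p)\cong\bigoplus_{d\in\ZZ}H^{i-1}\bigl(X_p,\mathcal O_{X_p}(d)\bigr)$, compatible with the Frobenius actions (the Frobenius of $R_p$ induces that of $\mathcal O_{X_p}$, and the comparison map intertwines them). Since Frobenius multiplies degrees by $p$, it carries the degree-$0$ component $[H^i_{\mm}(R_p)]_0\cong H^{i-1}(X_p,\mathcal O_{X_p})$ into itself, so injectivity of $F$ on all of $H^i_{\mm}(R_p)$ forces injectivity of $F$ on this component; taking $i=j+1$ with $j\ge 1$ yields injectivity of the Frobenius on $H^j(X_p,\mathcal O_{X_p})$ for all $j>0$ and all $p\gg 0$. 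There is no essential obstacle; the only point requiring care is the compatibility just invoked, namely that the Frobenius entering the definition of $F$-injectivity of $R_p$ induces, degree by degree, the usual Frobenius on $H^j(X_p,\mathcal O_{X_p})$ under $H^{j+1}_{\mm}(R_p)\cong\bigoplus_d H^j(X_p,\mathcal O_{X_p}(d))$ — this is the standard identification of the local-cohomology Frobenius with the \v{C}ech Frobenius, for which an isomorphism of graded modules is degreewise compatible with the degree-multiplying Frobenius. The spreading-out step is routine, since it repeats the corresponding part of the proof of Theorem~\ref{t:symb}.
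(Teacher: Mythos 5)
Your proposal is correct and follows essentially the same route as the paper: spread out over a finitely generated $\ZZ$-algebra so that the Gr\"obner basis (and hence the squarefree initial ideal) survives reduction mod $p$ for $p\gg 0$, apply Corollary \ref{imp} to get $F$-injectivity of $S_p/I_p$, and conclude via the identification $H^i(X_p,\mathcal{O}_{X_p})=[H^{i+1}_{\mm_p}(S_p/I_p)]_0$ together with the fact that Frobenius preserves the degree-zero component. The only differences are cosmetic (you reduce modulo minimal primes of $pZ$ rather than maximal ideals containing $p$, and you spell out the Frobenius compatibility in the local-to-sheaf cohomology comparison, which the paper leaves implicit).
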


\begin{proof}
Let $X\simeq \Proj S/I$, where $S=K[x_0,\cdots,x_n]$ with respect to  some embedding of $X$ in $\mathbb{P}^n$ with  defining ideal $I=(f_1,\cdots, f_t)$. We can, and will, choose $f_1,\ldots ,f_t$ forming a Gr\"obner basis.
  Choose a finitely generated $\mathbb{Z}$-algebra $A$ in such a way that, taking $S_A=A[x_0,\cdots,x_n]$ and defining $I_A=(f_1,\cdots,f_t)S_A$, we have that $S_A/{I_A}$ is free over $A$ and $S_A/{I_A}\otimes_A K=S/I$. Let $X_A=\Proj S_A/I_A$.
Then a reduction modulo a prime number $p$ of $X$ has the form 
$X_p=\Proj S_A/I_A \otimes L$ where $L=A/P$ for a maximal ideal $P\subset A$ containing $p$. In particular $S_A/I_A\otimes_A L=S_p/I_p$, where $S_p=L[x_1,\cdots,x_n]$ where $L$ is a field of characteristic $p>0$ and $I_p=(\bar{f_1},\cdots,\bar{f_{t}})$. Furthermore, if $p$ is big enough, we can assume $\{\bar{f_1},\cdots,\bar{f_t}\}$ remains a Gr\"{o}bner basis of $I_p$ and $\overline{\In(f_i)}=\In(\bar{f_i})$ for all $i$.
Hence $\In(I_p)$ is also square free. Thus by Corollary~\ref{imp}, $S_p/I_p$ is $F$-injective for all $p\gg 0$. Since for all $i>0$,
$H^i(X_p,\mathcal{O}_{X_p})=[H^{i+1}_{\mathfrak{m}_p}(S_p/I_p)]_0$, where $\mathfrak{m}_p$ denotes the homogeneous maximal ideal of $S_p/I_p$, the Frobenius action on $H^i(X_p,\mathcal{O}_{X_p})$ is injective.
\end{proof}

\section{Examples}

For the convenience of the reader we recall briefly the definitions of {\it Algebra with Straightening Law (ASL)} and {\it Cartwright-Sturmfels ideals}. For more details see, respectively, \cite{BV} and \cite{CDG}.

\bigskip

\noindent {\bf ASL}. Let $A=\oplus_{i\in \NN} A_i$ be a $\NN$-graded algebra and let $(H,\prec)$ be a finite poset. 
 Let $H\to \cup_{i>0}A_i$ be an  injective function. The elements of $H$ will be identified with their images. Given a chain $h_1\preceq h_2 \preceq \dots  \preceq h_s$ of elements of $H$ the corresponding product   $h_1\cdots h_s\in A$ is called  standard monomial. 
 One says that $A$ is an ASL on $H$ (with respect to the given embedding $H$ into $\cup_{i>0}A_i$) if three conditions are satisfied: 
 \begin{itemize}
\item The elements of $H$ generate  $A$ as a $A_0$-algebra. 
 \item The standard monomials are $A_0$-linearly independent. 
 \item For every pair $h_1, h_2$ of incomparable elements of $H$ there is a relation 
 $$h_1h_2=\sum_{j=1}^u  \lambda_j h_{j1}\cdots h_{jv_j}$$ 
 where $\lambda_j\in A_0\setminus \{0\}$,  the  $h_{j1}\cdots h_{jv_j}$  are distinct standard monomials and, assuming that $h_{j1}\preceq \dots \preceq  h_{jv_j}$,   one has $h_{j1}\prec h_1$ and $h_{j1}\prec h_2$ for all $j$. 
 \end{itemize}
 
 \bigskip

\noindent {\bf Cartwright-Sturmfels ideals}. Given positive integers  $d_1,\dots, d_m$ one considers the polynomial ring  $S=K[X_{ij} : 1\leq i\leq m \mbox{ and }1\leq j\leq d_i]$ with $\ZZ^m$-graded structure induced by assignment $\deg(x_{ij})=e_i\in \ZZ^m$. The group $G=\GL_{d_1}(K)\times \cdots \times \GL_{d_m}(K)$ acts on $S$  as the group of multigraded $K$-algebra automorphisms. The Borel subgroup  $B=B_{d_1}(K)\times \cdots  \times B_{d_m}(K)$ of the upper triangular invertible matrices acts on $S$ by restriction.  An ideal $J$ is  Borel-fixed if $g(J)=J$ for all $g\in B$.  
A multigraded ideal $I\subset S$ is Cartwright-Sturmfels if its multigraded Hilbert function coincides with that of a Borel-fixed radical ideal. 

\begin{cor}\label{c:ASL}
Let $K$ be a field of characteristic $p>0$. Then the following $K$-algebras are strongly $F$-injective, and so $F$-injective:
\begin{enumerate}
\item Algebras with straightening law.
\item Quotients of the form $S/I$ where $S$ is a polynomial ring over $K$ and $I$ is a Cartwright-Sturmfels ideal (e.g. $I$ is a binomial edge ideal).
\end{enumerate}
\end{cor}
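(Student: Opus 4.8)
The plan is to deduce both parts from Corollary~\ref{imp}(2): in each case it is enough to realize the algebra as $S/I$ with $S$ a polynomial ring over $K$ and to exhibit a monomial order $<$ for which $\init_<(I)$ is a squarefree monomial ideal.

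For (1), let $A$ be an ASL on a finite poset $(H,\prec)$ over $K$, so that $A_0=K$ and, the elements of $H$ being $K$-algebra generators, $A\cong K[X_h:h\in H]/J$ with $J$ the kernel of the natural surjection. First I would recall the classical fact that, starting from a linear extension of $\prec$, one can build a monomial order $<$ on $K[X_h:h\in H]$ with respect to which the leading term of each straightening relation $h_1h_2-\sum_j\lambda_j h_{j1}\cdots h_{jv_j}$ is $X_{h_1}X_{h_2}$; this is exactly the flat degeneration of an ASL (Hodge algebra) to its discrete model, see \cite{BV}. Then $\init_<(J)\supseteq(X_gX_h:g,h\text{ incomparable in }H)$, the Stanley--Reisner ideal of the order complex of $H$; comparing Hilbert functions and using that the standard monomials form a $K$-basis of $A$ and are precisely the monomials not divisible by any $X_gX_h$ with $g,h$ incomparable forces equality. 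Since this ideal is squarefree, Corollary~\ref{imp}(2) gives that $A$ is strongly $F$-injective.

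For (2), let $I\subset S$ be a Cartwright--Sturmfels ideal. Here I would invoke the theorem of Conca, De Negri and Gorla (see \cite{CDG}) that the initial ideal $\init_<(I)$ is radical, hence a squarefree monomial ideal, for \emph{every} monomial order $<$ (indeed a Cartwright--Sturmfels ideal has a universal Gr\"obner basis and its initial ideals are again Cartwright--Sturmfels, so squarefree). Fixing any such $<$, Corollary~\ref{imp}(2) yields that $S/I$ is strongly $F$-injective. The parenthetical case of binomial edge ideals is then subsumed, since binomial edge ideals are Cartwright--Sturmfels; alternatively one can use directly that a binomial edge ideal admits a squarefree initial ideal with respect to a lexicographic order.

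The argument is a formal consequence of Corollary~\ref{imp}(2) together with these two structural facts, so I do not expect a genuine obstacle; the only point deserving a little care is the ASL case, where one must be sure that the monomial order coming from the poset really does select $X_{h_1}X_{h_2}$ as the leading monomial of every straightening relation. This is precisely the content of the flat degeneration of an ASL to the Stanley--Reisner ring of its order complex, and need not be reproved here.
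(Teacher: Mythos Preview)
Your proposal is correct and follows essentially the same approach as the paper: in both cases you reduce to Corollary~\ref{imp}(2) by exhibiting a monomial order for which the defining ideal has a squarefree initial ideal, via the classical degeneration of an ASL to the Stanley--Reisner ring of its order complex in (1), and via the Conca--De Negri--Gorla result in (2). Your version is slightly more explicit in (1) (the Hilbert function comparison pinning down $\init_<(J)$ exactly), but the strategy and the cited inputs match the paper's proof.
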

\begin{proof}
(1) Writing $A=S/I$ where $S$ is a polynomial ring in variables indexed by the poset $H$ over $K$, and $I$ is the ideal generated by the straightening relations, choosing a degree (according to the grading of $A$) reverse lexicographic order $<$ extending the partial order on $H$, it easily follows from the definition that $\init_<(I)$ is a quadratic squarefree monomial ideal, hence $A$ is strongly $F$-injective by Corollary \ref{imp}. 

(2) In \cite[Proposition 1.6]{CDG} it has been shown that, in this case, $\init_<(I)$ is a squarefree monomial ideal for any monomial order, so the thesis follows once again by Corollary \ref{imp}.
\end{proof}

\begin{remark}
Let $S=K[X_1,X_2,X_3,X_4]$, where $K$ is algebraically closed field of  characteristic $p>0$, and $I$ the ideal generated by the $2$-minors of the matrix:
\[\begin{pmatrix}
X_4^4 & X_1 & X_3 \\
X_2 & X_4^4 & X_2-X_3
\end{pmatrix}.\]
One notes that $I=(X_1X_2-X_4^8, ~ X_2X_3-X_4^4(X_2-X_3),~  X_1X_3-X_4^8+X_4^4X_3)$.
It is easy to check that the ring $S/I$  is an ASL on the poset $H$ below:
\begin{equation*}
   \begin{tikzpicture}
    \node  at (0,1) {$X_4$};
    \node at(-1,2){$X_3$};
    \node at(0,2){$X_2$};
    \node at(1,2){$X_1$};
    \draw[thick](0,1.3)--(0,1.7);
    \draw[thick](-0.2,1.3)--(-0.7,1.7);
    \draw[thick](0.2,1.3)--(0.7,1.7);
\end{tikzpicture}
\end{equation*}
that is, in the poset $H$ we have $X_4 < X_3,X_2,X_1$ ($X_1,X_2$ and $X_3$ are incomparable). By \cite[Example 7.15]{HoHu} $S/I$ is $F$-rational but not $F$-split.
In particular, there are algebras with straightening law that are not $F$-split. Notice that the poset $H$ is ``wonderful'' in the terminology of \cite{Ei}, and for an ASL $A$ with $A_0$ a complete local ring being $F$-split and $F$-pure are equivalent conditions, so this is a counterexample to a conjecture stated at page 245 of \cite{Ei}.

Similarly, there are Cartwright-Sturmfels ideals which are not $F$-pure. For example consider the binomial edge ideal of a pentagon, namely
\[I=(X_iY_{i+1}-X_{i+1}Y_i,X_5Y_1-X_1Y_5 \ : \ i=1,2,3,4)\subset S=K[X_1,\ldots ,X_5,Y_1,\ldots ,Y_5].\]
We have that $I$ is a Cartwright-Sturmfels ideal by \cite[Theorem 2.1]{CDG2}, however, if $K$ has characteristic 2, $S/I$ is not $F$-split  by \cite[Example 2.7]{Mat}.
\end{remark}

\begin{cor}
Let $K$ be a field of characteristic $p>0$. Then the following $K$-algebras are $F$-split:
\begin{itemize}
\item Gorenstein ASL.
\item Gorenstein quotients of the form $S/I$ where $S$ is a polynomial ring over $K$ and $I$ is a Cartwright-Sturmfels ideal
\end{itemize}
\end{cor}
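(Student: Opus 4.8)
The plan is to combine the two parts of Corollary~\ref{c:ASL} with the standard fact that a Gorenstein $F$-injective ring is $F$-split (equivalently $F$-pure), which is due to Fedder and Watanabe; more precisely, for a Cohen-Macaulay ring the conditions ``$F$-injective'' and ``$F$-pure'' agree in the Gorenstein case, because the Frobenius action on the top local cohomology $H^d_{\mm}(R)$ (with $d=\dim R$) detects $F$-purity when the socle is one-dimensional. So the strategy is: first observe that both classes of rings in question are Cohen-Macaulay; then invoke Corollary~\ref{c:ASL} to get that they are $F$-injective; then apply the Gorenstein-plus-$F$-injective $\Rightarrow$ $F$-pure implication; and finally recall that over a field of positive characteristic $F$-pure and $F$-split coincide for these rings (they are, up to completion, $F$-finite, or one can argue directly since the homogeneous maximal ideal is the only relevant ideal and $R_0=K$ is a field, making $R$ $F$-finite after passing to $K$ perfect as in the proof of Theorem~\ref{t:charp}).

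The first step, then, is to verify the Cohen-Macaulay hypothesis. For a Gorenstein ASL this is immediate since Gorenstein rings are by definition Cohen-Macaulay. For a Gorenstein quotient $S/I$ with $I$ a Cartwright-Sturmfels ideal, again Gorenstein implies Cohen-Macaulay, so there is nothing extra to check; the word ``Gorenstein'' in the hypothesis already delivers Cohen-Macaulayness in both cases. So this step is essentially free.

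The main content is the implication ``Gorenstein $+$ $F$-injective $\Rightarrow$ $F$-split''. I would argue as follows: localize (or complete) at the homogeneous maximal ideal $\mm$, so that $R=(S/I)_{\mm}$ is a $d$-dimensional Gorenstein local ring. Being Cohen-Macaulay, $H^i_{\mm}(R)=0$ for $i<d$, so $F$-injectivity reduces to the injectivity of $F\colon H^d_{\mm}(R)\to H^d_{\mm}(R)$. Since $R$ is Gorenstein, $H^d_{\mm}(R)$ is an injective hull of the residue field, and its socle is one-dimensional; an easy Matlis-duality argument (or the computation with a system of parameters $\underline{x}$, identifying $H^d_{\mm}(R)=\varinjlim R/(x_1^t,\dots,x_d^t)$ and noting that the socle is generated by the image of the canonical generator) shows that the Frobenius map on $H^d_{\mm}(R)$ is injective if and only if it is nonzero on the socle, which in turn is equivalent to $R$ being $F$-pure (i.e.\ $F$-split, after reducing to the $F$-finite case as above). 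Thus $F$-injective $+$ Gorenstein forces $F$-split. Combining with Corollary~\ref{c:ASL}(1) for the first bullet and Corollary~\ref{c:ASL}(2) for the second gives the result.

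The step I expect to be the main (though still standard) obstacle is making the ``$F$-injective $+$ Gorenstein $\Rightarrow$ $F$-pure'' implication airtight in the graded, not-necessarily-$F$-finite setting: one must be a little careful that $F$-purity and $F$-splitting agree, which requires either $F$-finiteness or an argument via the perfect closure $K'$ of $K$ exactly as in the proof of Theorem~\ref{t:charp} (tensoring up, using flatness of $S\subset S'=S\otimes_K K'$ to preserve the relevant $\Ext$/local-cohomology injectivities, and descending the splitting back to $S$). Once that reduction is in place, the socle computation is routine. Everything else is a direct citation of Corollary~\ref{c:ASL}.
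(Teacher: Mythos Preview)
Your proposal is correct and follows essentially the same approach as the paper: both combine Corollary~\ref{c:ASL} with Fedder's result that for a Gorenstein ring $F$-injective and $F$-split (equivalently $F$-pure) coincide. The paper simply cites \cite[Lemma 3.3]{Fedder} for that implication, whereas you unpack the socle/Matlis-duality argument and worry about the $F$-finite reduction; this extra care is fine but not needed beyond the citation.
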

\begin{proof}
For a Gorensteun ring being in $F$-split is equivalent to being $F$-injective by \cite[Lemma 3.3]{Fedder}. So the result follows from Corollary \ref{c:ASL}.
\end{proof}

%\begin{cor}
%Let $I_t(X)\subset K[X]$ be the ideal of $t$-minors of a generic $m\times n$-matrix $X$, and $K$ a field of characteristic $p>\min\{t,m-t,n-t\}$. Then the Rees algebra $\Rr(I_t(X))$ is $F$-rational.
%\end{cor}
%\begin{proof}
%Fixed a suitable monomial order, $\init(\Rr(I_t(X)))$ is Noetherian and normal by \cite[Theorem 3.6]{BC1}, so we conclude by Corollary \ref{imp}.
%\end{proof}
%
%\begin{cor}
%Let $I_t(H)\subset K[H]$ be the ideal of $t$-minors of a generic $m\times n$-Hankel matrix $H$, and $K$ a field of positive characteristic. Then the Rees algebra $\Rr(I_t(H))$ is $F$-rational.
%\end{cor}

The following argument has been suggested by Winfried Bruns.

\begin{cor}\label{c:At}
Let $M_t(X)$ be the set of $t$-minors of a $m\times n$ generic matrix $X$, and $K$ a field of characteristic $p>\min\{t,m-t,n-t\}$. The algebra of minors $K[M_t(X)]$ is $F$-regular.
\end{cor}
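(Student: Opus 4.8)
The plan is to exhibit a Gröbner degeneration of the algebra of minors $K[M_t(X)]$ whose initial algebra is a normal affine semigroup ring, and then invoke part (1) of Corollary \ref{imp}. The key point is that the algebra of minors admits a Sagbi basis given by the $t$-minors themselves with respect to a suitable (diagonal or anti-diagonal) term order: this is the classical result of Bruns--Conca (and Sturmfels) that $\init_<(K[M_t(X)])$ is the Ehrhart-type semigroup ring generated by the diagonals of the $t$-minors. So the first step is to recall that $\init_<(K[M_t(X)])$ is a well-understood affine semigroup ring $K[\mathcal{S}]$, where $\mathcal{S}$ is the semigroup generated by the diagonal monomials of the $t$-minors of $X$.

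Next I would verify that this semigroup ring is normal. Here is where the characteristic hypothesis $p > \min\{t, m-t, n-t\}$ enters — but wait, normality of an affine semigroup ring is a characteristic-free property, so the hypothesis must be used differently. The point instead is that while $\init_<(K[M_t(X)])$ is always Noetherian, it need \emph{not} always be normal; Bruns--Conca showed that the initial algebra of the $t$-minors (for the appropriate term order) \emph{is} normal precisely — or at least — when $t$, $m-t$, or $n-t$ is small, and the bound $p > \min\{t,m-t,n-t\}$ is exactly the range in which the relevant combinatorial/representation-theoretic argument producing a Sagbi basis and normality goes through. So the second step is to cite the precise statement (from the work of Bruns--Conca on algebras of minors) guaranteeing that, under $p > \min\{t,m-t,n-t\}$, the $t$-minors form a Sagbi basis of $K[M_t(X)]$ and the resulting initial algebra is a normal semigroup ring. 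Once that is in hand, $\init_<(K[M_t(X)])$ is a normal toric ring, hence a direct summand of a polynomial ring, hence $F$-regular by \cite[Proposition 4.12]{Hochster-Huneke}.

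The final step is a direct application: $\init_<(K[M_t(X)])$ being Noetherian and normal, Corollary \ref{imp}(1) yields that $K[M_t(X)]$ is $F$-rational. But we want $F$-regular, which is stronger; so one should note that $K[M_t(X)]$ is a graded ring and argue $F$-regularity directly, or observe that since the initial algebra is $F$-regular and $F$-regularity of the initial algebra transfers to $F$-regularity of the algebra itself in this graded setting — this last transfer is the analogue, for $F$-regularity, of the argument used in the proof of Corollary \ref{imp}(1). I expect the main obstacle to be precisely this: as Example \ref{ex:anurag} shows, $F$-regularity is \emph{not} a $G$-deforming property in general, so one cannot simply quote Proposition \ref{p:G-def}; instead one must use a special feature of the situation — namely that the generic point of the Gröbner family is already known to be $F$-rational and Cohen--Macaulay, and one upgrades $F$-rational to $F$-regular using that $K[M_t(X)]$ is a normal domain with a resolution of singularities or a suitable reduction argument, or more simply that for this specific family the deformation argument of Bruns--Conca directly produces $F$-regularity because the special fiber is a direct summand and the property descends along the flat $\NN$-graded degeneration to the generic fiber by a result of Hochster--Huneke on $F$-regularity of graded rings. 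Pinning down the exact citation and verifying the transfer in the graded (non-local) setting, in analogy with Theorem \ref{F-rat'l-homogeneous}, is the delicate part.
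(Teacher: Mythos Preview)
Your first half is essentially what the paper does: cite the Bruns--Conca result that, under the hypothesis $p>\min\{t,m-t,n-t\}$, the $t$-minors are a Sagbi basis and $\init_<(K[M_t(X)])$ is a normal semigroup ring, then apply Corollary~\ref{imp}(1) to conclude that $K[M_t(X)]$ is $F$-rational. (The characteristic hypothesis is indeed needed for the Sagbi basis statement, not for normality per se.)

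The genuine gap is in the upgrade from $F$-rational to $F$-regular. You correctly observe that $F$-regularity is not $G$-deforming (Example~\ref{ex:anurag}), but none of the alternatives you float actually works: there is no general result saying $F$-regularity of the special fibre of a graded Gr\"obner family passes to the generic fibre, and invoking ``resolution of singularities'' or an unspecified ``reduction argument'' is not a proof. The paper's argument is quite different and uses no deformation at all for this step. One reduces to the case where $K[M_t(X)]$ is \emph{Gorenstein}, and then uses the fact that for Gorenstein rings $F$-rational and $F$-regular coincide (Hochster--Huneke). The reduction goes as follows: first enlarge the $m\times n$ matrix to a square $n\times n$ matrix $X'$; the algebra $K[M_t(X)]$ is a direct summand of $K[M_t(X')]$, so $F$-regularity descends. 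For a square matrix, $K[M_t(X')]\cong K[M_{n-t}(X')]$, so one may assume $t\geq n/2$. Then enlarge again to a $2t\times 2t$ matrix; by Bruns--Conca the algebra $K[M_t]$ of a $2t\times 2t$ generic matrix is Gorenstein, and one is done. The idea you were missing is precisely this reduction-to-Gorenstein via enlarging the matrix and exploiting the $t\leftrightarrow n-t$ symmetry in the square case.
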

\begin{proof}
First of all, by \cite[Theorem 3.11]{BC1} there exists a monomial order such that $\In(K[M_t(X)])$ is a normal semigroup ring, so $K[M_t(X)]$ is $F$-rational by Corollary \ref{imp}.

In order to see that $K[M_t(X)]$ is $F$-regular, we assume that $m\leq n$. So, let us add $n-m$ rows to $X$ in order to form the generic $n\times n$-matrix $X'$. By \cite[Proposition 1.4]{BCV} and \cite[Proposition 4.12]{Hochster-Huneke}, if $K[M_t(X')]$ is $F$-regular, then $K[M_t(X)]$ is $F$-regular as well. Furthermore, by \cite[Proposition 1.3]{BCV}, $K[M_t(X')]$ is $F$-regular if and only if $K[M_{n-t}(X')]$ is $F$-regular.

So we can assume that $X$ is a generic $n\times n$-matrix and $t\geq n/2$. Now we can add $2t-n$ rows and $2t-n$ columns to $X$ and get a generic $2t\times 2t$-matrix $X'$. Again using \cite[Proposition 1.4]{BCV} and \cite[Proposition 4.12]{Hochster-Huneke}, if $K[M_t(X')]$ is $F$-regular, then $K[M_t(X)]$ is $F$-regular as well.

So, we can eventually assume that $X$ is a generic $2t\times 2t$-matrix. In this case, $K[M_t(X)]$ is Gorenstein by \cite[Theorem 5.5]{BC2}. Since a Gorenstein ring is $F$-regular if and only if it is $F$-rational by 	\cite[Corollary 4.7(a)]{Hochster-Huneke}, we are done.
\end{proof}

\begin{remark}
When $t=\min\{m,n\}$, the $K$-algebra $K[M_t(X)]$ is the coordinate ring of a Grassmannian in its Pl\"{u}cker embedding, and in this case the F-regularity in positive characteristic had already been proved in \cite[Theorem 7.14]{HoHu}.

In general, that the $K$-algebra $K[M_t(X)]$ is $F$-rational whenever $K$ a field of characteristic $p>\min\{t,m-t,n-t\}$ was already known and proved in \cite{BC3}. Analogously, in \cite{BC3} it has been proved that also the Rees algebra of the ideal of the $t$-minors of $X$ is  $F$-rational whenever $K$ a field of characteristic $p>\min\{t,m-t,n-t\}$. The $F$-split and $F$-regularity properties for these and other blowup algebras of determinantal objects are studied in \cite{DMN}.
\end{remark}

We conclude with the following corollary, recently proved in \cite[Theorem 4.3]{CNV}.

\begin{cor}
 If $X$ is a smooth projective curve of genus 1 over the rationals, then $\In_<(I)$ is never squarefree, where $I$ is the homogeneous ideal defining $X\subset\mathbb{P}^n$ (independently on the embedding). 
\end{cor}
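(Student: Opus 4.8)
The plan is to apply Corollary \ref{imp2} and then reach a contradiction using Elkies' theorem on the infinitude of supersingular primes. So suppose, for contradiction, that for some projective embedding $X\subset\mathbb{P}^n$ and some monomial order $<$ on $K[x_0,\dots,x_n]$ the initial ideal $\In_<(I)$ of the defining ideal $I$ is squarefree. Since $X$ is a curve, $H^i(X_p,\mathcal{O}_{X_p})=0$ for all $i\geq 2$ (for any reduction $X_p$ of $X$ in the sense of Corollary \ref{imp2}), so that Corollary applies only for $i=1$ and tells us that the Frobenius action on $H^1(X_p,\mathcal{O}_{X_p})$ is injective for all primes $p\gg 0$. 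For $p\gg 0$, standard spreading-out shows that $X_p$ is a smooth, projective, geometrically connected curve of genus $1$ over a finite field $L$ of characteristic $p$; in particular $\dim_L H^1(X_p,\mathcal{O}_{X_p})=1$, and so the injectivity above simply says that $X_p$ is \emph{ordinary}.

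The next step is to pass to the Jacobian. Let $E=\mathrm{Jac}(X)$, an elliptic curve over $K=\mathbb{Q}$. For $p\gg 0$ both $X$ and $E$ have good reduction at $p$, and forming the Jacobian commutes with good reduction, so $\mathrm{Jac}(X_p)\cong E_p$, the good reduction of $E$ over $L$. The curve $X_p$ is a torsor under $\mathrm{Jac}(X_p)=E_p$ over the finite field $L$, so by Lang's theorem this torsor is trivial and $X_p\cong E_p$ as $L$-schemes. Hence the Frobenius action on $H^1(X_p,\mathcal{O}_{X_p})$ is nonzero if and only if the Frobenius action on $H^1(E_p,\mathcal{O}_{E_p})$ is nonzero, i.e. if and only if the elliptic curve $E_p$ is ordinary.

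It then remains to invoke Elkies' theorem: every elliptic curve over $\mathbb{Q}$ has infinitely many supersingular primes. Picking such a prime $p\gg 0$, the curve $E_p$ is supersingular, so by the previous paragraph the Frobenius action on $H^1(X_p,\mathcal{O}_{X_p})$ vanishes, contradicting the conclusion of Corollary \ref{imp2}. Therefore no embedding of $X$ admits a squarefree initial ideal.

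The routine verifications --- that $X_p$ is smooth of genus $1$ for $p\gg 0$, the identification $\mathrm{Jac}(X_p)\cong \mathrm{Jac}(X)_p$, the triviality of a torsor under an abelian variety over a finite field, and the equivalence ``Frobenius nonzero on $H^1(\mathcal{O})$'' $\Leftrightarrow$ ``ordinary'' --- are all standard; the one genuinely external (and, I expect, the crucial) input is Elkies' theorem, and it is essential here that the base field is $\mathbb{Q}$, since the infinitude of supersingular primes is not known over an arbitrary number field.
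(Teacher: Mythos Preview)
Your proof is correct and follows the same overall strategy as the paper: assume a squarefree initial ideal exists, apply Corollary~\ref{imp2} to deduce Frobenius injectivity on $H^1(X_p,\mathcal{O}_{X_p})$ for $p\gg 0$, and contradict this using Elkies' theorem on the infinitude of supersingular primes.

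The one place where your argument is more careful than the paper's is worth flagging. The paper simply asserts that a smooth projective genus-$1$ curve over $\mathbb{Q}$ ``is isomorphic to an elliptic curve'' and then applies Elkies directly. Taken literally this is not quite right: a genus-$1$ curve over $\mathbb{Q}$ need not have a rational point (e.g.\ Selmer's cubic), so $X$ itself need not be an elliptic curve over $\mathbb{Q}$. Your detour through the Jacobian $E=\mathrm{Jac}(X)$, together with Lang's theorem to identify $X_p$ with $E_p$ over the finite residue field, is exactly the clean way to repair this, and it costs nothing extra since Elkies applies to $E$. So your route is the same in spirit but slightly more rigorous on this point; otherwise the two proofs coincide.
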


\begin{proof}
Since $X$ is a smooth curve of genus $1$, then $X$ is isomorphic to an elliptic curve. Then for infinitely many primes $p$, the reduction mod $p$ considered in \cite{Elkies}, $X_p$ of $X$ is supersingular \cite[Theorem 1]{Elkies}. That is the Frobenius morphism on $H^1(X_p,\mathcal{O}_{X_p})$ is zero for infinitely many primes $p$. Hence the corollary follows from Corollary~\ref{imp2}.
\end{proof}

\bibliographystyle{plain}
\bibliography{Binomial-F-inj.bib}
\end{document}